\DeclareMathOperator{\SO}{\textbf{SO}}
\DeclareMathOperator{\SU}{\textbf{SU}}
\DeclareMathOperator{\SL}{\textbf{SL}}
\DeclareMathOperator{\Id}{\text{Id}}
\DeclareMathOperator{\End}{\text{End}}
\DeclareMathOperator{\R}{\mathbb{R}}
\DeclareMathOperator{\Sp}{\mathbb{S}}
\DeclareMathOperator{\C}{\mathbb{C}}
\DeclareMathOperator{\CPone}{\mathbb{CP}^1}
\DeclareMathOperator{\Z}{\mathbb{Z}}
\DeclareMathOperator{\Ker}{\text{Ker}}
\DeclareMathOperator{\degree}{\text{deg}}
\DeclareMathOperator{\rank}{\text{rank}}
\DeclareMathOperator{\Res}{\text{Res}}
\DeclareMathOperator{\pardeg}{\text{par-deg}}
\DeclareMathOperator{\slb}{\mathfrak{sl}}
\newtheorem{theorem}{Theorem}[section]
\newtheorem{proposition}[theorem]{Proposition}
\theoremstyle{definition}
\newtheorem{definition}[theorem]{Definition}
\theoremstyle{remark}
\numberwithin{equation}{section}
\begin{document}

\title{DPW potentials for compact symmetric CMC surfaces in $\Sp^3$}

\begin{abstract}
Inspired by the work of Heller \cite{He1:tesi}, we show that there exists a DPW potential for the Lawson surface $\xi_{k-1, l-1}$ from which it is possible to reconstruct the minimal immersion $f:  \xi_{k-1, l-1} \to \Sp^3$ via the DPW method.
Moreover, we extend the result to surfaces immersed in the 3-sphere with constant mean curvature which satisfy a certain symmetric condition.
\end{abstract}
\maketitle

\author{\begin{center}Benedetto Manca\footnote{Dipartimento di Matematica e Informatica, Universit\`a degli Studi di Cagliari, Via Ospedale 72, 09124 Cagliari \\ Email Address: bmanca@unica.it}\end{center}}
\subjclass{\textbf{MSC}: 53C42, 53C43, 14H60}

\keywords{Differential geometry, algebraic geometry, CMC surfaces, Integrable systems}

\date{\today}


\section{Introduction and statement of the main results}
\label{SecIntro}
A natural variational problem in the geometry of surfaces is the \textit{isoperimetric problem} which consists in finding the surface of minimum area among all the surfaces enclosing a fixed volume. The answer to this problem is the round sphere and a proof of this has been known for a long time.

A more general question in the study of surfaces immersed in a three-dimensional space form is to determine the surfaces whose area is critical either under volume preserving deformations or under deformations with no constrains on the volume. The differential equation characterizing the first class of surfaces is $H=const.$, where $H$ is the mean curvature function, and are called constant mean curvature surfaces (CMC).
The only compact surface CMC embedded in $\R^3$ or $\mathbb{H}^3$ is the round sphere \cite{Al:tesi, Fer:tesi}. The situation is different if one considers CMC or minimal embeddings in $\Sp^3$. In fact it has been proved the existence of compact surfaces minimally embedded in $\Sp^3$ for every genus $g$ \cite{La:tesi, KPS:tesi}.

Given a minimal or CMC immersion $f: M \hookrightarrow \Sp^3$ of a compact Riemann surface $M$, the condition $H = const.$ is an elliptic PDE difficult to solve. The integrable systems method for minimal or CMC surfaces in $\Sp^3$ aims at interpret the condition $H = const.$ as a system of ODEs which is easier to solve.

In the case $H=0$, the \textit{associated family of flat $\SL(2, \mathbb{C}$)-connections} of the immersion $f$, introduced by Hitchin in 1990 \cite{Hit:tesi}, is given by
\begin{equation}
\label{assfamconnect}
\nabla^\lambda := \nabla + \lambda^{-1}\Phi - \lambda \Phi^*, \quad \lambda \in \mathbb{C}^*
\end{equation}
and it is defined over a hermitian rank 2 vector bundle $E \to M$. The flat connection $\nabla$ is defined as $\nabla:= d + \frac{1}{2}f^{-1}df$, the \textit{Higgs field} $\Phi \in H^0(M, \End_0(E)\otimes K)$ is the $(1,0)$-component of the 1-form $f^{-1}df$ and $\Phi^*$ is the $(0, 1)$-component of $f^{-1}df$.

For a CMC immersion $f$ with $H\neq 0$, it is still possible to define its associated family of flat $\SL(2, \mathbb{C})$-connections as follows: let $f^{-1}df = \Psi - \Psi^*$ be the decomposition of $f^{-1}df$ into $\mathbb{C}$-linear and $\mathbb{C}$-antilinear components. Define $\lambda_2 := (-iH + 1)/(iH + 1)$ and
\begin{equation*}
\Phi = \frac{\lambda_2}{1 + \lambda_2}\Psi, \quad\quad \Phi^* = \frac{1}{1 + \lambda_2}\Psi^*.
\end{equation*}
Then, the associated family of flat connections of $f$ can be written in the form \eqref{assfamconnect} with $\nabla = d + \Phi - \Phi^*$.

Both in the minimal and CMC case, the associated family of flat connections $\nabla^\lambda$ satisfies the following conditions:
\begin{itemize}
\item[$(a)$]
the connection $\nabla^\lambda$ is unitary for every $\lambda \in \Sp^1\subset \mathbb{C}^*$;
\item[$(b)$]
there exist two distinct points $\lambda_1, \lambda_2 \in \Sp^1$ such that $\nabla^{\lambda_i}$ is trivial for $i = 1, 2$;
\item[$(c)$]
the Higgs field $\Phi$ is nilpotent and nowhere vanishing.
\end{itemize}

Moreover, the following theorem due to Hitchin and Bobenko shows that it is possible to reconstruct the CMC immersion from its associated family of flat connections:
\begin{theorem}[\cite{Hit:tesi, Bo:tesi}]
Given a family of flat $\SL(2, \mathbb{C})$-connections $\nabla^\lambda$ of the form \eqref{assfamconnect} which satisfies conditions $(a) - (c)$, the gauge transformation between the trivial connections $\nabla^{\lambda_1}$ and $\nabla^{\lambda_2}$ gives a CMC immersion $f: M \hookrightarrow \Sp^3$ with mean curvature 
$$
H = i\frac{\lambda_1 + \lambda_2}{\lambda_1 - \lambda_2},
$$
whose associated family of flat $\SL(2, \mathbb{C})$-connections is the family $\nabla^\lambda$.
\end{theorem}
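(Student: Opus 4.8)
The plan is to reverse the construction that produces the associated family from a CMC immersion, showing that conditions $(a)$–$(c)$ are exactly what is needed to run this reversal. First I would use condition $(b)$: since $\nabla^{\lambda_1}$ and $\nabla^{\lambda_2}$ are both trivial (flat with trivial holonomy), each admits a global parallel frame, equivalently a gauge $F_i$ (defined on the universal cover, or globally if the bundle is trivial) with $F_i^{-1}\dif F_i = -(\text{connection form of }\nabla^{\lambda_i})$. The candidate immersion is then the gauge transformation relating these two trivializations, namely $f := F_1^{-1}F_2$ up to the usual normalizations. The first task is therefore to write out $f^{-1}\dif f$ explicitly in terms of the family $\nabla^\lambda = \nabla + \lambda^{-1}\Phi - \lambda\Phi^*$ evaluated at $\lambda_1,\lambda_2$, and to check that the resulting $1$-form is $\slb(2,\C)$-valued and, via condition $(a)$, actually takes values tangent to $\Sp^3$ so that $f$ genuinely maps into $\Sp^3$.

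The second step is to verify that $f$ is a conformal immersion whose induced data match the prescribed family. Because $\nabla^\lambda$ has the specific $\lambda$-dependence in \eqref{assfamconnect}, computing $f^{-1}\dif f$ from $F_1,F_2$ recovers a decomposition into a $(1,0)$-part proportional to $\Phi$ and a $(0,1)$-part proportional to $\Phi^*$; here condition $(c)$ that $\Phi$ is nilpotent and nowhere vanishing is what guarantees $f^{-1}\dif f$ is nowhere zero (so $f$ is an immersion) and that the induced metric is conformal with the right type decomposition. I would then reverse the formulas defining $\Phi,\Phi^*$ from $\Psi,\Psi^*$ in the CMC case: matching $\Phi=\tfrac{\lambda_2}{1+\lambda_2}\Psi$ and $\Phi^*=\tfrac{1}{1+\lambda_2}\Psi^*$ against the $1$-form $f^{-1}\dif f$ forces a relation between $\lambda_1,\lambda_2$ and the mean curvature, yielding precisely $H = i(\lambda_1+\lambda_2)/(\lambda_1-\lambda_2)$.

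The third step is to confirm constancy of $H$ and that the flat-connection family produced by the reconstructed $f$ coincides with the original $\nabla^\lambda$. Constancy of $H$ should follow automatically once the Maurer--Cartan (flatness) equation $\dif\nabla^\lambda + \nabla^\lambda\wedge\nabla^\lambda = 0$ is expanded in powers of $\lambda$: the coefficient equations are exactly the Gauss--Codazzi equations for a CMC surface, and since $\nabla^\lambda$ is flat for \emph{all} $\lambda\in\C^*$ the mean curvature extracted from the $\lambda_1,\lambda_2$ data cannot vary. Then I would close the loop by building the associated family of the immersion $f=F_1^{-1}F_2$ from scratch and checking it agrees with the given $\nabla^\lambda$; since both are determined by the same $\nabla$, $\Phi$, $\Phi^*$, this is a bookkeeping identification.

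The hard part will be the reality/unitarity bookkeeping in the first two steps: one must track how condition $(a)$ (unitarity of $\nabla^\lambda$ on $\Sp^1$) interacts with the choice of $\lambda_1,\lambda_2\in\Sp^1$ to ensure the gauge $f=F_1^{-1}F_2$ lands in $\SU(2)\cong\Sp^3$ rather than merely $\SL(2,\C)$, and that the resulting immersion is well-defined on $M$ itself rather than only on its universal cover. Verifying that the triviality in condition $(b)$ (as opposed to mere flatness) is what descends $f$ to a single-valued map on $M$, and that $\Phi$ nowhere vanishing in condition $(c)$ rules out branch points, is where the geometric content concentrates; the rest is the standard loop-group/frame computation underlying the DPW and Hitchin--Bobenko correspondence.
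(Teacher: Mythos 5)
The paper does not actually prove this theorem: it is quoted as background from Hitchin \cite{Hit:tesi} and Bobenko \cite{Bo:tesi}, so there is no internal proof to compare against. Your proposal correctly reconstructs the standard argument of those references --- take unitary parallel frames $F_1, F_2$ for the trivial connections $\nabla^{\lambda_1}, \nabla^{\lambda_2}$, set $f = F_1^{-1}F_2$ (landing in $\SU(2)\cong\Sp^3$ by condition $(a)$), compute $f^{-1}\dif f$ from the $\lambda$-dependence in \eqref{assfamconnect}, and use nilpotency and nonvanishing of $\Phi$ together with flatness for all $\lambda\in\C^*$ to obtain an unbranched conformal immersion satisfying Gauss--Codazzi with the stated constant $H$ --- so it is essentially the same approach as the proof the paper relies on by citation.
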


Hitchin \cite{Hit:tesi} classified all CMC immersions of a compact Riemann surface of genus 1 into $\Sp^3$. Unfortunately, this approach relies on the fact that the fundamental group of the surface is abelian and cannot be generalized to higher genus surfaces.

More recently, see Theorem \ref{DPWcpt} below, it has been proven that, for compact Riemann surfaces of genus $g \geq 2$ CMC immersed in $\Sp^3$, it is better to consider the associated family of flat $\SL(2, \mathbb{C})$-connections up to gauge transformations.
Given a compact Riemann surface $M$ of genus $g \geq 2$ CMC immersed in $\Sp^3$, let $\mathcal{A}^2(M)$ be the moduli space of flat connections defined on the same hermitian vector bundle $E \to M$ where the associated family of flat connections $\nabla^\lambda$ is defined.

\begin{theorem}[{\cite{HeHeS:tesi}}]
\label{DPWcpt}
Let $M$ be a compact Riemann surface of genus $g \geq 2$ and $\mathcal{D}: \mathbb{C}^* \to \mathcal{A}^2(M)$ a holomorphic map satisfying
\begin{itemize}
\item[$(1)$]
the unit circle $\Sp^1 \subset \mathbb{C}^*$ is mapped into the set consisting of gauge equivalence classes of unitary flat connections;
\item[$(2)$]
around $\lambda=0$, there exists a local lift $\tilde{\nabla}^\lambda$ of $\mathcal{D}$ with an expansion in $\lambda$
\begin{equation}
\label{loclift}
\tilde{\nabla}^\lambda \sim \lambda^{-1}\Psi + \tilde{\nabla}^0  + \text{ higher order terms in } \lambda
\end{equation}
for a nilpotent $\Psi \in \Gamma(M, K \otimes \End_0(E))$;
\item[$(3)$]
there are two distinct points $\lambda_1, \lambda_2 \in \Sp^1 \subset \mathbb{C}^*$ such that $\mathcal{D}(\lambda_j)$, $j=1, 2$, represents the trivial gauge equivalence class.
\end{itemize}
Then, there exists a (possibly branched) CMC immersion $f: M \hookrightarrow \Sp^3$ inducing the map $\mathcal{D}$ as the family of gauge equivalence classes
\begin{equation}
\label{Dmap}
\mathcal{D}(\lambda) = [\nabla^\lambda], \quad \lambda \in \mathbb{C}^*,
\end{equation}
where $\nabla^\lambda$ is the associated family of flat connections of $f$.
The branch points of $f$ are given by the zeros of $\Psi$. Conversely, every CMC immersion determines a holomorphic $\mathbb{C}^*$-curve into $\mathcal{A}^2(M)$ via \eqref{Dmap}.
\end{theorem}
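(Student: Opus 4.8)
The plan is to reduce the statement to the reconstruction theorem of Hitchin and Bobenko \cite{Hit:tesi, Bo:tesi} recalled above. Concretely, I would extract from the holomorphic map $\mathcal{D}$ an \emph{honest} family $\nabla^\lambda$ of flat $\SL(2,\mathbb{C})$-connections of the form \eqref{assfamconnect}, lying in the gauge class $\mathcal{D}(\lambda)$ for each $\lambda$, verify that this family satisfies conditions $(a)$--$(c)$, and then invoke that theorem to produce $f$. The converse is read off directly from the construction recalled in the introduction, so the real content is the lifting from gauge classes to connections.

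First I would fix the behaviour near $\lambda=0$. Condition $(2)$ provides a local lift $\tilde{\nabla}^\lambda$ with the Laurent expansion \eqref{loclift}; its principal part $\lambda^{-1}\Psi$ is nilpotent, and I would set $\Phi:=\Psi$ and read the constant term $\tilde{\nabla}^0$ as a background connection on $E$. A $\lambda$-dependent holomorphic gauge transformation, trivial at $\lambda=0$, can then be used to normalize the higher-order terms so that the lift acquires the shape $\nabla+\lambda^{-1}\Phi+O(1)$, matching the leading terms of \eqref{assfamconnect}.

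The central step, and the one I expect to be the main obstacle, is to \emph{globalize} this lift over all of $\mathbb{C}^*$ and to force it into the three-term form \eqref{assfamconnect}. Since $\mathbb{C}^*$ is not simply connected, a priori the local lift is single-valued only up to a gauge transformation picked up around the loop $\Sp^1$, and the task is to show that the residual ambiguity is a constant unitary gauge and hence harmless. The mechanism I would use is the reality condition attached to the involution $\lambda\mapsto 1/\bar{\lambda}$, which fixes $\Sp^1$ pointwise. Condition $(1)$ says that on $\Sp^1$ the connections $\mathcal{D}(\lambda)$ are gauge-equivalent to unitary ones, a property governed by the Iwasawa factorization of the relevant loop group; choosing the lift compatibly with this involution both determines the $\Sp^1$-antilinear partner $\Phi^*$ of $\Phi$ and forces the Laurent expansion in $\lambda$ to truncate to the powers $\lambda^{-1},\lambda^0,\lambda^{1}$, which is exactly \eqref{assfamconnect} with $\nabla=d+\Phi-\Phi^*$ the middle term. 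Making this gluing precise --- reconciling the simple pole at $\lambda=0$ with the reflected simple pole at $\lambda=\infty$ dictated by the reality condition, and checking that the unitary gauges on $\Sp^1$ supplied by $(1)$ can be chosen holomorphically in $\lambda$ --- is where the genus $g\ge 2$ hypothesis and the structure of the moduli space $\mathcal{A}^2(M)$ enter, and is the heart of the argument.

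With the global family $\nabla^\lambda$ of the form \eqref{assfamconnect} in hand, verifying $(a)$--$(c)$ is then routine: condition $(1)$ gives $(a)$, the two trivial points of condition $(3)$ give $(b)$, and the nilpotency of $\Psi$ from condition $(2)$ gives $(c)$, with $\Phi$ vanishing exactly at the branch points. The Hitchin--Bobenko theorem now yields a (possibly branched) CMC immersion $f:M\hookrightarrow\Sp^3$ with mean curvature $H=i(\lambda_1+\lambda_2)/(\lambda_1-\lambda_2)$, whose associated family is $\nabla^\lambda$ and whose branch points are the zeros of $\Psi$, as claimed. For the converse, given a CMC immersion its associated family \eqref{assfamconnect} already satisfies $(a)$--$(c)$ as recalled above; passing to gauge classes via \eqref{Dmap} produces the map $\mathcal{D}$, whose holomorphic dependence on $\lambda$ is manifest from the rational form of \eqref{assfamconnect} and for which $(1)$--$(3)$ are the images of $(a)$--$(c)$.
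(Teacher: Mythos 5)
First, a structural point: the paper never proves Theorem \ref{DPWcpt}. It is quoted from Heller--Heller--Schmitt \cite{HeHeS:tesi} and used as a black box throughout, so there is no internal proof to compare you against; your proposal has to stand on its own as a proof of the cited result.

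As such, it does not stand, because the entire mathematical content of the theorem is concentrated in the step you explicitly defer. Your reduction scheme --- lift $\mathcal{D}$ to an honest family of the three-term form \eqref{assfamconnect} and then invoke the Hitchin--Bobenko theorem --- is a reasonable outline, and the converse direction you give is indeed easy. But the lifting step is the theorem, and you only name the tools (the reality involution $\lambda \mapsto 1/\bar{\lambda}$, Iwasawa factorization) before declaring that making them precise ``is the heart of the argument''; that is a restatement of the difficulty, not a proof. Concretely, three things are missing. (i) Local holomorphic lifts of $\mathcal{D}$ exist near irreducible points of $\mathcal{A}^2(M)$, but the trivial classes $\mathcal{D}(\lambda_1)$, $\mathcal{D}(\lambda_2)$ demanded by hypothesis $(3)$ are reducible, hence singular points of the moduli space, and the existence and behaviour of lifts there require a separate argument --- your sketch never mentions this. (ii) Condition $(1)$ only says that each single class $\mathcal{D}(\lambda)$, $\lambda \in \Sp^1$, contains some unitary representative; upgrading this pointwise statement to a choice of representatives (equivalently, of unitarizing gauges) varying holomorphically, or even continuously, in $\lambda$ is exactly what any loop-group factorization needs as input, and is itself a nontrivial claim you assert rather than prove. (iii) The truncation of the Laurent expansion to the powers $\lambda^{-1},\lambda^0,\lambda^1$ follows by a Liouville-type argument only once one has a global lift on $\mathbb{C}^*$ with a simple pole at $\lambda=0$ and a reflected simple pole at $\lambda=\infty$ in the affine space of connections; producing such an involution-equivariant global lift is again the deferred step, so the truncation is circular as written. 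Until (i)--(iii) are carried out, what you have proved is the converse direction plus a plan for the forward direction, not the theorem.
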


Theorem \ref{DPWcpt} can be used to extend the \textit{DPW method} for higher genus compact surfaces. The DPW method \cite{DPW:tesi} was introduced by Dorfmeister, Pedit and Wu in 1998 and gives a way to construct all CMC immersions into $\Sp^3$ from a holomorphic (or meromorphic, depending on the situation) $\mathbb{C}^*$-family $\eta(z, \lambda)$ of 1-forms with values in $\slb(2, \mathbb{C})$. The family $\eta(z, \lambda)$ is called \textit{DPW potential} and has expansion series in $\lambda$ of the form
$$
\eta(z, \lambda) = \sum_{j=-1}^\infty \eta_j(z)\lambda^jdz,
$$
with $\eta_{-1}(z)$ nilpotent and upper triangular (we refer to \cite{DPW:tesi} for more details).

If the local lift $\tilde{\nabla}^\lambda$ in Theorem \ref{DPWcpt} can be written as $d + \eta(z, \lambda)$ where $\eta(z, \lambda)$ is a DPW potential, then it is possible to use the DPW method to reconstruct the immersion $f: M \hookrightarrow \Sp^3$.
In \cite{He1:tesi}, this approach has been used to construct a DPW potential for the Lawson surface $\xi_{2,1}$ of genus 2, where it has been exploited the fact that the Lawson surface of genus 2 is equipped with several symmetries.

In this article we will show that a similar approach can be used  to obtain a DPW potential for the minimal surface $\xi_{k-1, l-1}$ of genus $(k-1)(l-1)$ discovered by Lawson in \cite{La:tesi}. The minimal surface $\xi_{k-1, l-1}$ is obtained from the solution of a Plateau problem on a convex geodesic polygon in $\Sp^3$ such that the adjacent edges $\gamma_j$ and $\gamma_{j-1}$ meet at an angle of the form $\pi/(d_j + 1)$ for some $d_j \in \mathbb{N}^+$. The Plateau solution is then reflected about all the edges in order to obtain a compact minimal surfaces embedded in $\Sp^3$.
From the way it is constructed, we have that $\xi_{k-1, l-1}$ is equipped with an action of $\Z_k \times \Z_l \subset \SO(4)$ which is faithful. Moreover, the quotient $\xi_{k-1, l-1}/\Z_k \times \Z_l$ is the Riemann sphere $\CPone$ and the projection to the quotient $\pi: \xi_{k-1, l-1}\to \CPone$ is a $kl$-fold covering map, branched at four points denoted by $z_1, \dots, z_4 \in \CPone$.

We are in the right position to enunciate our main result
\begin{theorem}
\label{MainTheor}
Let $\xi_{k-1, l-1}$ be the Lawson surface of genus $(k-1)(l-1)$ with symmetry group $\Z_k\times \Z_l \subset \SO(4)$. Let $\nabla^\lambda$ be the associated family of flat $\SL(2, \mathbb{C})$-connections of the minimal immersion $f: \xi_{k-1, l-1} \hookrightarrow \Sp^3$. Then, there exists a holomorphic family of logarithmic connections
\begin{equation*}
\tilde\nabla^\lambda = \lambda^{-1}\tilde\Phi + \tilde\nabla + \text{ higher order terms in $\lambda$ }
\end{equation*}
on the four punctured sphere $\CPone$, singular at the four branch points $z_1, \dots, z_4$ of $\pi : \xi_{k-1, l-1} \to \xi_{k-1, l-1}/\Z_k\times\Z_l = \CPone$, where $\tilde\Phi$ is a nilpotent $\mathfrak{sl}(2, \mathbb{C})$-valued complex linear 1-form, which satisfies the following:
\begin{itemize}
\item[$(i)$]
there exists a flat connection $\hat\nabla$ on $\xi_{k-1, l-1}$ with $\Z_2$-monodromy representation, such that the families of connections $\nabla^\lambda$ and $\pi^*\tilde\nabla^\lambda \otimes \hat\nabla$ are gauge equivalent via a family of gauge transformations $g(\lambda)$ which extends holomorphically at $\lambda=0$;
\item[$(ii)$]
there is an open neighborhood $U$ of $\lambda=0$  such that $\tilde\nabla^\lambda$ can be represented by a $\lambda$-family of Fuchsian systems for $\lambda \in U$. More specifically, for $\lambda \in U$, we have
\begin{equation}
\label{DPWpotentialexpression}
\tilde\nabla^\lambda = d + \eta(z, \lambda) = d + \sum_{j=-1}^\infty\eta_j(z)\lambda^j,
\end{equation}
where, for every $j$, $\eta_j(z)$ is a $\mathfrak{sl}(2, \mathbb{C})$-valued 1-form with simple poles at the branch points $z_1, \dots, z_4$ and holomorphic on $\CPone\smallsetminus \{z_1, \dots, z_4\}$;
\item[$(iii)$]
the map $\lambda \mapsto \eta(z, \lambda)$ extends meromorphically to $\mathbb{C}^*$ and the connection $\tilde\nabla^\lambda = d + \eta(z, \lambda)$ has unitarizable monodromy representation for every $\lambda \in \Sp^1$ such that $\eta(z, \lambda)$ does not have a pole;
\item[$(iv)$]
the eigenvalues of the local residues of $\tilde\nabla^\lambda$ are given by the eigenvalues (of the first or second factor in $\SU(2)\times\SU(2)$) of the four generators $\gamma_1, \dots, \gamma_4$ of the finite group $\Gamma \subset \SU(2)\times\SU(2)$ which double covers $\Z_k\times\Z_l$.
\end{itemize}
In particular, the the immersion $f:\xi_{k-1, l-1} \hookrightarrow \Sp^3$ can be constructed from a meromorphic DPW potential on the four punctured sphere.
\end{theorem}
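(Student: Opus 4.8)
The plan is to exploit the $G := \Z_k\times\Z_l$ symmetry to ``abelianize'' the problem, pushing the associated family $\nabla^\lambda$ down from the genus $(k-1)(l-1)$ surface to its quotient $\CPone = \xi_{k-1,l-1}/G$, where it becomes a family of Fuchsian systems. \emph{First}, I would record that the minimal immersion $f$ intertwines the $G$-action on $\xi_{k-1,l-1}$ with the action of $G\subset\SO(4)$ on $\Sp^3$. Since $\nabla^\lambda = \nabla + \lambda^{-1}\Phi - \lambda\Phi^*$ is built functorially from $f^{-1}df$ and the isometries of $\Sp^3 = \SU(2)$ act as $x\mapsto axb^{-1}$ with $(a,b)\in\SU(2)\times\SU(2)$, each $\gamma\in G$ lifts to a bundle automorphism of $E$ that gauges $\nabla^\lambda$ to itself. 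The only subtlety is that this lift is defined only up to the $\Z_2$-kernel of the double cover $\Gamma\to G$, where $\Gamma\subset\SU(2)\times\SU(2)$; in other words $(E,\nabla^\lambda)$ carries a genuine $\Gamma$-action but only a projective $G$-action.

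\emph{Second}, I would descend. Over $\CPone\smallsetminus\{z_1,\dots,z_4\}$ the $G$-action on the base is free, so the $\Gamma$-equivariant family $\nabla^\lambda$ descends to a family $\tilde\nabla^\lambda$ of flat connections on the punctured sphere, the projective ambiguity being absorbed into a fixed flat connection $\hat\nabla$ on $\xi_{k-1,l-1}$ with $\Z_2$-monodromy, so that $\nabla^\lambda\cong\pi^*\tilde\nabla^\lambda\otimes\hat\nabla$. Tracking this descent $\lambda$-holomorphically near $\lambda=0$ --- i.e.\ choosing the equivariant frame so that the intertwining gauge $g(\lambda)$ has no pole at the origin --- produces $(i)$. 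At each branch point $z_j$ the stabilizer is cyclic of order $n_j$, and the standard local model of an equivariant connection under cyclic ramification shows that $\tilde\nabla^\lambda$ has a logarithmic (simple-pole) singularity at $z_j$ whose residue eigenvalues are precisely the eigenvalues of the generator $\gamma_j$ acting on the fiber, read off in the relevant $\SU(2)$ factor; this simultaneously yields $(iv)$ and the logarithmic structure required in $(ii)$.

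\emph{Third}, I would install the DPW normal form and check the remaining analytic conditions. The nowhere-vanishing nilpotent Higgs field $\Phi$ (condition $(c)$) descends to a nilpotent $\tilde\Phi$, giving the leading term $\lambda^{-1}\tilde\Phi$; gauging $\tilde\nabla^\lambda$ near $\lambda=0$ so that $\eta_{-1}$ is upper-triangular and nilpotent puts the family in the Fuchsian form \eqref{DPWpotentialexpression}, with each $\eta_j$ having at worst simple poles at $z_1,\dots,z_4$, which completes $(ii)$. Unitarity of $\nabla^\lambda$ on $\Sp^1$ (condition $(a)$) passes to unitarizable monodromy of the descended system, and meromorphicity in $\lambda$ is inherited from the holomorphic $\C^*$-dependence of the construction, giving $(iii)$. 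The final ``in particular'' clause is then immediate: $\eta(z,\lambda)$ is a meromorphic DPW potential, and since $\tilde\nabla^\lambda$ induces the $\C^*$-curve $\mathcal{D}(\lambda)=[\nabla^\lambda]$ satisfying $(1)$--$(3)$ of Theorem \ref{DPWcpt}, the DPW method reconstructs $f$.

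\emph{The main obstacle} I expect is the combined local/global bookkeeping at the four punctures: one must show the descended family is genuinely logarithmic (no higher-order or irregular terms appear), identify the residue eigenvalues with those of the $\gamma_j$ in the correct $\SU(2)$ factor, and do so while keeping the intertwining gauge $g(\lambda)$ regular at $\lambda=0$ and compatible with the single global $\Z_2$-twist $\hat\nabla$ across all four branch points at once. Choosing local $G$-equivariant frames adapted simultaneously to the nilpotent direction of $\Phi$ and to the isotropy representations, and matching them along the punctured sphere, is the delicate part of the argument.
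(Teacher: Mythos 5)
Your overall strategy is the same as the paper's: use the $\Z_k\times\Z_l$ symmetry and its spin lift $\Gamma\subset\SU(2)\times\SU(2)$ to descend $\nabla^\lambda$ to the four-punctured sphere, absorb the $\Z_2$ spin ambiguity into a flat connection $\hat\nabla$, read the logarithmic residues off the cyclic isotropy at the branch points, and feed the result into the DPW machinery. (The paper packages the descent differently --- it introduces an auxiliary double cover $\tau:\tilde{M}\to M$ on which $\Gamma$ acts faithfully and works with the Biswas invariant pushforward $(\tilde{\pi}_*\tilde{E})^{\Gamma}$ rather than quotienting over the free locus, and it obtains $\hat\nabla$ from the $\Z_2$-monodromy of $\tau$ via Riemann--Hilbert --- but this is a difference of formalism, not of substance.)

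The genuine gap is in your third step, and it is exactly where the bulk of the paper's work lies: to represent $\tilde\nabla^\lambda$ globally as a Fuchsian system $d+\eta(z,\lambda)$ with only simple poles, and to get the expansion \eqref{DPWpotentialexpression} starting at $\lambda^{-1}$, you must control the holomorphic splitting type of the rank-2 bundle over $\CPone$ underlying the descended family. ``Gauging so that $\eta_{-1}$ is upper triangular and nilpotent'' is not available in the abstract: a logarithmic connection on $\mathcal{O}(n)\oplus\mathcal{O}(m)$ with, say, $(n,m)=(-1,-3)$ admits no global frame in which the connection form has only simple poles at $z_1,\dots,z_4$, and passing to a meromorphic trivialization creates extra poles. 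The paper handles this in two stages: for $\lambda\neq 0$, the residue eigenvalues $(d_j\pm 1)/2d_j\in(0,1)$ of \eqref{locresiduetildenabla} together with the parabolic degree force the splitting to be $\mathcal{O}(-2)\oplus\mathcal{O}(-2)$ or $\mathcal{O}(-1)\oplus\mathcal{O}(-3)$ (Proposition \ref{possiblholomstructures}); at $\lambda=0$, the residue $\tilde\Phi$ is shown to be a nilpotent parabolic Higgs field with non-vanishing residues, and parabolic Higgs stability (inherited from stability of $\tilde{E}$) excludes everything except $\mathcal{O}(-2)\oplus\mathcal{O}(-2)$ with normalized parabolic lines (Propositions \ref{parabolicHiggsfield}, \ref{parhiggsfieldprop}, \ref{holstructlambda0}, plus the computations cited from the thesis). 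Only then does openness of the generic splitting type produce the neighborhood $U$ in $(ii)$, the discreteness of the jump locus needed for the meromorphic (rather than essentially singular) extension in $(iii)$, and the two-case analysis in the final reconstruction (unitarization on the whole disc when no jumps occur, versus Iwasawa factorization on a smaller disc avoiding the jump set). Your proposal never addresses the splitting type, so $(ii)$, $(iii)$ and the closing reconstruction claim are unsupported as written; note also that what you flag as the main obstacle --- the local bookkeeping at the punctures --- is in fact the easy part, dispatched by the explicit invariant frames $(w^{d_j-1}s, w^{d_j+1}t)$.
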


The results of this article are contained in the Phd thesis of the author.
For related work on the subject see also \cite{HeHeTr:tesi}, where the authors used the DPW potentials to estimate the area of the Lawson surfaces $\xi_{k-1, l-1}$.

The article is organized as follows: in Section \ref{SecLift} we will show that it is possible to lift the $\SO(4)$-action on the Lawson surface $\xi_{k-1, l-1}$ to a $\SU(2)\times\SU(2)$-action. This will be needed to define an action on the associated family of flat $\SL(2, \mathbb{C})$-connections of the immersion $f: \xi_{k-1, l-1} \hookrightarrow \Sp^3$ and on the holomorphic vector bundle $E\to \xi_{k-1, l-1}$ where the associated family is defined.
We will also define an auxiliary Riemann surface $\tilde{M}$ which double covers $\xi_{k-1, l-1}$ on which the $\SU(2)\times\SU(2)$-action is faithful.
In Section \ref{SecParbund} we define a vector bundle over the Riemann sphere $\CPone$ which can be equipped with a $\lambda$-family of parabolic structures and a $\lambda$-family of logarithmic connections $\tilde{\nabla}^\lambda$. The existence of a family of parabolic structures will be necessary in order to write $\tilde{\nabla}^\lambda$ as a family of Fuchsian systems which will give the DPW potential from which it will be possible to reconstruct the immersion $f: \xi_{k-1, l-1}\hookrightarrow \Sp^3$ (as shown in \cite{HeHe:tesi}).
The residue $\tilde{\Phi}$ of $\tilde{\nabla}^\lambda$, at $\lambda=0$, is studied in Section \ref{SecMain} where it is proven that $\tilde{\Phi}$ gives a parabolic Higgs field satisfying appropriate conditions which ensure that around $\lambda=0$ the holomorphic vector bundle over $\CPone$ is given by $\mathcal{O}(-2) \oplus \mathcal{O}(-2)$. 
We then prove Theorem \ref{MainTheor} which shows that the family of logarithmic connections $\tilde{\nabla}^\lambda$ gives a DPW potential for the Lawson surface $\xi_{k-1, l-1}$ from which it is possible to reconstruct the immersion $f: \xi_{k-1, l-1} \hookrightarrow \Sp^3$.
Finally, in Section \ref{SecSymsurf}, we present an extension of Theorem \ref{MainTheor} to surfaces which satisfy a symmetric condition (cf. Definition \ref{symCMCsurf}).

The following diagram could be useful for the reader to summarize the mathematical objects introduced so far, together with the corresponding maps.
\begin{center}
\begin{tikzcd}
&(\tilde{E} = \tau^*E, \tau^*\nabla^\lambda)  \arrow[rr, bend left, "\tilde\pi_*"] \arrow[d]     &(E, \nabla^\lambda)\arrow[l, dashed, "\tau^*"]  \arrow[d]               &((\tilde\pi_*\tilde{E})^{\Gamma,\lambda}, \tilde\nabla^\lambda)\arrow[l, dashed, "\pi^*"]\arrow[d]\\
&\tilde{\xi}_{k-1, l-1}\arrow[r, "\tau"]  \arrow[rr, bend right, "\tilde\pi"]               &\xi_{k-1, l-1} \arrow[r,"\pi"]       			&\CPone
\end{tikzcd}
\end{center}

\textbf{Acknowledgement}. The author would like to thank S. Heller for suggesting the problem tackled in this paper and for guiding him during his stay at the University of Tübingen and the University of Hannover. The author would also like to thank  L. Heller , A. Loi, S. Montaldo, F. Pedit and N. Schmitt for the helpful discussion and suggestions. 

The author was supported by Prin 2015 – Real and Complex Manifolds; Geometry, Topology and Harmonic Analysis – Italy and by KASBA- Funded by Regione Autonoma della Sardegna.

\section{Lifting the $\SO(4)$-action to a $\SU(2)\times\SU(2)$-action}
\label{SecLift}
In the rest of the article we will denote with $M$ the Lawson surface of genus $(k-1)(l-1)$, $k, l \geq 3$ with $\SO(4)$-symmetry group $G = \Z_k\times \Z_l$.  
We briefly recall some details of the construction of $M$ that we will need in the rest of the article (for more details we refer to \cite{La:tesi}).

Let $P_1, \dots, P_{2l+2} \in M$ be the points with stabilizer group $\Z_k$ and $Q_1, \dots, Q_{2k+2}\in M$ the points with stabilizer group $\Z_l$.
The generator $g_1$ for the stabilizer group of one point $P_{2r+1}$, $r = 0, \dots, l$ acts locally as a rotation around them. The generator $g_2$ for the stabilizer group of the points $P_{2r+2}$, $r = 0, \dots, l$ acts locally as $g_1^{-1}$. The same holds for the generators $g_3$ and $g_4$ for the stabilizer groups of the points $Q_{2s+1}$ and $Q_{2s+2}$, $s = 0, \dots, k$, respectively. 

We consider the presentation of $G = \Z_k\times \Z_l$ given by
\begin{equation*}
G = \langle g_1, \dots, g_4 \mid g_1^k = g_2^k = g_3^l = g_4^l =1, g_2 = g_1^{-1}, g_4 = g_3^{-1}, g_1 \cdots g_4  =1\rangle.
\end{equation*}

If $z_1, \dots, z_4 \in \CPone$ are the branch points of the holomorphic map $\pi:M \to \CPone$ of degree $kl$, we have that
\begin{equation*}
\begin{split}
\pi^{-1}(z_1) &= \{P_1, P_3, \dots, P_{2l+1}\}\\
\pi^{-1}(z_2) &= \{P_2, P_4, \dots, P_{2l+2}\}\\
\pi^{-1}(z_3) &= \{Q_1, Q_3, \dots, Q_{2k+1}\}\\
\pi^{-1}(z_4) &= \{Q_2, Q_4, \dots, Q_{2k+2}\}.
\end{split}
\end{equation*}

In order to construct a DPW potential for the surface $M$ with $\SO(4)$-symmetry group $G$, we will need an action of a subgroup $\Gamma \subset \SU(2) \times \SU(2)$ which acts on $M$ as the group $G$.

The subgroup $\Gamma$ is obtained as the pre-image of $G$ under the double spin covering $\varepsilon: \SU(2) \times \SU(2) \to \SO(4)$. Since, in our situation, $G = \Z_k\times \Z_l$ we have $\Gamma = \Z_{2k}\times\Z_{2l} \subset \SU(2)\times \SU(2)$.

We will need a faithful group action in order to construct the DPW potential for $M$. Since $\Gamma$ does not act faithfully on $M$ (for example $-\Id \in \Gamma$ acts on $M$ as the identity element, fixing all the points on the surface), we show that there exists a Riemann surface $\tilde{M}$, double covering $M$, on which $\Gamma$ acts faithfully.

Consider $z_0 \in \CPone \smallsetminus \{z_1, \dots, z_4\}$, where $z_1, \dots, z_4$ are the branch points of $\pi$, and let
\begin{equation}
\label{Mmonodromy}
\rho: \pi_1(\CPone \smallsetminus \{z_1, \dots, z_4\}, z_0)\to G
\end{equation}
be its monodromy representation. A simple loop $\eta_j$ around the point $z_j$ is mapped to the generator $g_j$ of $G$, for $j = 1, \dots, 4$.

Geometrically, we can interprete this as follows: If $d_j$ is the order of the element $g_j \in G$, the loop $\eta_j^{d_j}$ in $\CPone$ is lifted to a closed loop around the points in $\pi^{-1}(z_j)$. Thus, the surface $M$ closes, locally around each of the point in $\pi^{-1}(z_j)$, after $d_j$ rotations of an angle $\frac{2\pi}{d_j}$.

Let $\gamma_j \in \varepsilon^{-1}(g_j)$, $j=1, \dots, 4$, we can define a group homomorphism 
\begin{equation}
\label{tildemonodromy}
\tilde{\rho}: \pi_1(\CPone \smallsetminus \{z_1, \dots, z_4\}, z_0)\to \Gamma
\end{equation}
which maps the simple loop $\eta_j$ around $z_j$ to the element $\gamma_j$. Moreover, after an appropriate choice of the elements $\gamma_1, \dots, \gamma_4$, we have 
\begin{equation*}
\gamma_1 \cdots \gamma_4=1, 
\end{equation*}
where 1 is the identity element of $\Gamma$.

The next result shows that the group homomorphism $\tilde{\rho}$ can be used to define a Riemann surface $\tilde{M}$ on which $\Gamma$ acts faithfully.

\begin{proposition}
\label{tildeMprop}
Given the surface $M$ and the group homomorphism $\tilde{\rho}: \pi_1(\CPone \smallsetminus \{z_1, \dots, z_4\}, z_0)\to \Gamma$ defined in \eqref{tildemonodromy}, there exists a Riemann surface $\tilde{M}$ such that:
\begin{itemize}
\item[$(1)$]
The group $\Gamma$ acts faithfully on $\tilde{M}$ and the quotient $\tilde{M}/\Gamma$ is the Riemann sphere $\CPone$;
\item[$(2)$]
The holomorphic map $\tilde{\pi}: \tilde{M} \to \tilde{M}/\Gamma$ is branched at the points $z_1, \dots, z_4\in \CPone$ and its monodromy representation is given by $\tilde{\rho}$;
\item[$(3)$]
There exists a holomorphic covering map $\tau: \tilde{M} \to M$ of degree 2, branched at the fixed points of the action of $G$ on $M$.
\end{itemize}
\end{proposition}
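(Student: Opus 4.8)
The plan is to build $\tilde{M}$ by the Riemann existence theorem as the branched $\Gamma$-covering of $\CPone$ determined by $\tilde\rho$, and then to recognize $M$ as the intermediate $\Z_2$-quotient cut out by the central kernel of $\varepsilon$. First I would check that $\tilde\rho$ is surjective. By construction $\varepsilon\circ\tilde\rho=\rho$, so $\Ima(\tilde\rho)$ surjects onto $G$ under $\varepsilon$, whence $[\Gamma:\Ima(\tilde\rho)]\in\{1,2\}$. Choosing, as the text allows, $\gamma_2=\gamma_1^{-1}$ and $\gamma_4=\gamma_3^{-1}$ makes the relation $\gamma_1\cdots\gamma_4=1$ automatic and makes each $\gamma_j$ the spin lift whose order is doubled, so $\gamma_1$ has order $2k$ and $\gamma_1^{\,k}=-\Id$. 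Hence the central element $-\Id=\ker\varepsilon$ lies in $\Ima(\tilde\rho)$, forcing $\Ima(\tilde\rho)=\Gamma$. The homomorphism $\tilde\rho$ then classifies a connected regular covering of $\CPone\smallsetminus\{z_1,\dots,z_4\}$ with deck group $\Gamma$; filling in the punctures gives a compact Riemann surface $\tilde M$ and a holomorphic map $\tilde\pi\colon\tilde M\to\CPone$ branched exactly over $z_1,\dots,z_4$, on which $\Gamma$ acts by holomorphic deck transformations. Faithfulness of the $\Gamma$-action, the identification $\tilde M/\Gamma=\CPone$, and the fact that the monodromy of $\tilde\pi$ is $\tilde\rho$ are then immediate, giving $(1)$ and $(2)$.

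Next I would produce $\tau$. Since $\varepsilon\circ\tilde\rho=\rho$ and $\rho$ is the monodromy of $\pi\colon M\to\CPone$, we have $\ker\tilde\rho\subseteq\ker\rho$ with $\ker\rho/\ker\tilde\rho\cong K:=\ker\varepsilon=\{\Id,-\Id\}\cong\Z_2$, a central subgroup of $\Gamma$. By the Galois correspondence for coverings, the subgroup $K\subseteq\Gamma$ corresponds to the intermediate covering $M\to\CPone$, so $\tilde\pi$ factors as $\pi\circ\tau$ with $\tau\colon\tilde M\to\tilde M/K\cong M$ the quotient by the central involution $-\Id$, a degree-$2$ holomorphic map. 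This is the covering in $(3)$.

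It then remains to locate the branch points of $\tau$. Being the quotient by $\langle-\Id\rangle\cong\Z_2$, the map $\tau$ ramifies (with index $2$) exactly along the fixed-point set $\mathrm{Fix}(-\Id)\subset\tilde M$. Deck transformations act freely over $\CPone\smallsetminus\{z_1,\dots,z_4\}$, so $\mathrm{Fix}(-\Id)\subseteq\tilde\pi^{-1}(\{z_1,\dots,z_4\})$. With the choice above each $\gamma_j$ has order $2d_j$ ($d_1=d_2=k$, $d_3=d_4=l$), so $-\Id=\gamma_j^{\,d_j}\in\langle\gamma_j\rangle$; as $\Gamma$ is abelian, the stabilizer in $\Gamma$ of every point of $\tilde\pi^{-1}(z_j)$ equals $\langle\gamma_j\rangle$ and therefore contains $-\Id$. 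Hence $-\Id$ fixes every point over each $z_j$, i.e. $\mathrm{Fix}(-\Id)=\tilde\pi^{-1}(\{z_1,\dots,z_4\})$, and applying $\tau$ (using $\pi\circ\tau=\tilde\pi$) gives $\tau(\mathrm{Fix}(-\Id))=\pi^{-1}(\{z_1,\dots,z_4\})$. The latter is precisely the set of points of $M$ with nontrivial $G$-stabilizer, i.e. the fixed points of the $G$-action, which proves $(3)$.

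The step I expect to be the real obstacle is the compatibility exploited in the last paragraph: one needs each lift $\gamma_j$ to have order $2d_j$, so that the central involution is detected inside every local monodromy group (otherwise $\tau$ would fail to branch over some $G$-fixed points), while simultaneously keeping the relation $\gamma_1\cdots\gamma_4=1$ that makes $\tilde\rho$ well defined. For odd $d_j$ the two spin lifts of $g_j$ have different orders, so these two demands can compete; the special relations $g_2=g_1^{-1}$, $g_4=g_3^{-1}$ of the Lawson symmetry group are what reconcile them, via the choice $\gamma_2=\gamma_1^{-1}$, $\gamma_4=\gamma_3^{-1}$. Verifying that the analytic compactification and quotient are compatible, so that $\tau$ is genuinely holomorphic rather than merely topological, is routine by the Riemann existence theorem once this group-theoretic point is settled.
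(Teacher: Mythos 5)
Your proof is correct, and at the level of strategy it is the same as the paper's, whose entire proof consists of citing the Riemann existence theorem for $(1)$--$(2)$ and asserting that $(3)$ follows from the construction of $M$ via $\rho$ together with the fact that $\Gamma$ double covers $G$. The value of your write-up is that it supplies exactly the facts this one-liner hides, and one of them is not mere bookkeeping. Surjectivity of $\tilde\rho$ (your first step) is what makes $\tilde M$ connected, so that faithfulness, $\tilde M/\Gamma = \CPone$, and the degree count in $(3)$ make sense; the paper never checks it. More substantively, your identification of the branch locus of $\tau$ via $\mathrm{Fix}(-\Id)=\tilde\pi^{-1}(\{z_1,\dots,z_4\})$ rests on choosing each $\gamma_j$ to be the lift of $g_j$ of order $2d_j$, so that $-\Id=\gamma_j^{d_j}$ lies in every local stabilizer. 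This is exactly what the rest of the paper needs (the stabilizer $\Z_{2k}$ of $\tilde P_1$, the local coordinates with $w^{2d_j}=z$, the residues \eqref{locresiduetildenabla}), but note it is not the choice the paper's own preceding discussion makes: for $k$ odd the paper sets $\tilde\rho(\eta_1)=\gamma_1$ with $\gamma_1^k=1$, and with an order-$k$ local monodromy the stabilizers over $z_1$ would be $\Z_k$ rather than $\Z_{2k}$, $\tau$ would be unbranched over $P_1,P_3,\dots,P_{2l+1}$, and $(3)$ would fail at those points. Your choice ($\gamma_1,\gamma_3$ of doubled order, $\gamma_2=\gamma_1^{-1}$, $\gamma_4=\gamma_3^{-1}$, which keeps $\gamma_1\cdots\gamma_4=1$) is the one under which the proposition, and the paper's later computations, actually hold. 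The only fact you use without proof is the existence of an order-$2d_j$ lift: as you note this is automatic for $d_j$ odd, and for $d_j$ even it holds because $g_j$ fixes points of $\Sp^3$ (e.g.\ $P_1$), hence is a simple rotation whose two lifts are $\pm\bigl(\diag(e^{\pi i/d_j},e^{-\pi i/d_j}),\,\cdot\,\bigr)$, both of order $2d_j$ --- the same matrices the paper writes down.
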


\begin{proof}
The existence of the Riemann surface $\tilde{M}$ and the conditions $(1)$ and $(2)$ come from the Riemann's existence Theorem and its proof (see, for example, \cite[Chapter 3 pp. 90-91]{Mir:tesi}).

Condition $(3)$ comes from the definition of the surface $M$ via the Riemann's existence Theorem using the group homomorphism $\rho$ in \eqref{Mmonodromy} and from the fact that the group $\Gamma\subset \SU(2) \times \SU(2)$ double covers the group $G\subset\SO(4)$.

\end{proof}

We now compute the local eigenvalues of the action of the group $\Gamma$ on the surface $M$.

Let $\tau: \tilde{M} \to M$ be the double covering defined in Proposition \ref{tildeMprop}. The map $\tau$ is branched at the points $P_r$ and $Q_s$ for $r=1, \dots, 2l+2$ and $s = 1, \dots, 2k+2$.

Assume that $k$ is an odd number and $l$ is an even number. The preimages $\tilde{\gamma}_1, \gamma_1 \in \Z_{2k}\times \Z_{2l}$ of the generators $g_1$ for the stabilizer group of the point $P_1$ (the situation for the points $P_{2r+1}$, $r = 1, \dots, l$ is analogous) satisfy
\begin{equation*}
\tilde{\gamma}_1^{2k} = \gamma_1^k = 1.
\end{equation*}

There is a 1:1 correspondence between the stabilizer group of $P_1$ and a subgroup of the stabilizer group $\Z_{2k} \subset \Z_{2k}\times \Z_{2l}$ of the point $\tilde{P}_1 := \tau^{-1}(P_1) \in \tilde{M}$.

Let $z$ be a local coordinate around $P_1$ and $w$ a local coordinate around $\tilde{P}_1$ such that $w^2 = z$. The element $g_1$ acts around $P_1$ as
\begin{equation*}
g_1(z) = e^{\frac{2\pi i}{k}}z
\end{equation*}
and the element $\tilde{\gamma}_1$ acts around $\tilde{P}_1$ as
\begin{equation*}
\tilde{\gamma}_1(w) = e^{\frac{2\pi i}{2k}}w.
\end{equation*}

Up to rotations and sign, we can consider the representation of $\Z_{2k}\times \Z_{2l}$ in $\SU(2) \times \SU(2)$ such that (on the first factor)
\begin{equation*}
\tilde{\gamma}_1 \mapsto
\begin{pmatrix}
e^{\frac{2\pi i}{2k}} & 0\\
0 & e^{-\frac{2\pi i}{2k}}
\end{pmatrix}.
\end{equation*}
Therefore, the element $\gamma_1$ is mapped to the element whose first factor is given by
\begin{equation*}
\gamma_1 \mapsto
-
\begin{pmatrix}
e^{\frac{2\pi i}{2k}} & 0\\
0 & e^{-\frac{2\pi i}{2k}}
\end{pmatrix}
=
\begin{pmatrix}
e^{\frac{2\pi i(k+1)}{2k}} & 0\\
0 & e^{\frac{2\pi i(k-1)}{2k}}
\end{pmatrix}.
\end{equation*}

Since $g_2 = g_1^{-1}$, the preimages $\tilde{\gamma}_2, \gamma_2 \in \Z_{2k}\times \Z_{2l}$ of $g_2$ are such that $\gamma_2$ is mapped, under the representation of $\Z_{2k}\times \Z_{2l}$ in $\SU(2) \times \SU(2)$, to the element whose first factor is given by
\begin{equation*}
\gamma_2 \mapsto
\begin{pmatrix}
e^{\frac{2\pi i(k-1)}{2k}} & 0\\
0 & e^{\frac{2\pi i(k+1)}{2k}}
\end{pmatrix}.
\end{equation*}

The local eigenvalues of the monodromy representation $\tilde{\rho}: \pi_1(\CPone \smallsetminus \{z_1, \dots, z_4\}, z_0) \to \Z_{2k}\times \Z_{2l}$ mapping the simple loops $\eta_1, \eta_2$ around $z_1, z_2$ respectively to $\gamma_1, \gamma_2$, are given by
\begin{equation}
\label{loceigenk}
\frac{k+1}{2k}, \quad \text{ and } \quad \frac{k-1}{2k}.
\end{equation}

Consider now the generator $g_3$ of the stabilizer group of the point $Q_1$ (the situation for the points $Q_{2s+1}$, $s = 1, \dots, k$ is analogous). Its preimages $\tilde{\gamma}_3, \gamma_3 \in \Z_{2k}\times \Z_{2l}$ satisfy
\begin{equation*}
\tilde{\gamma}_3^{2l} = \gamma_3^{2l} = 1.
\end{equation*}

Let $z$ be a local coordinate around $Q_1$ and $w$ a local coordinate around $\tilde{Q}_1 := \tau^{-1}(Q_1) \in \tilde{M}$, such that $w^2 = z$. The element $g_3$ acts, locally around $Q_1$, as
\begin{equation*}
g_3(z) = e^{\frac{2\pi i}{l}}z.
\end{equation*}

Then, the element $\tilde{\gamma}_3$ acts around $\tilde{Q}_1$ as 
\begin{equation*}
\tilde{\gamma}_3(w) = e^{\frac{2\pi i}{2l}}w.
\end{equation*}

Moreover, the element $\tilde{\gamma}_3$ acts as a rotation of the same angle, but in the opposite direction, around $\tilde{Q}_2 := \tau^{-1}(Q_2)\in \tilde{M}$ (and around the points $\tilde{Q}_{2s+2} := \tau^{-1}(Q_{2s+2})$ for $s=1, \dots, k$ in the same way ):
\begin{equation*}
\tilde{\gamma}_3(\tilde{w}) = e^{\frac{2\pi i(2l-1)}{2l}}\tilde{w},
\end{equation*}
where $\tilde{w}$ is a local coordinate around $\tilde{Q}_2$.

Up to rotations and sign, we can consider the representation of $\Z_{2k}\times\Z_{2l}$ in $\SU(2) \times \SU(2)$ such that (on the first factor)
\begin{equation*}
\tilde{\gamma}_3 \mapsto
\begin{pmatrix}
e^{\frac{2\pi il}{2l}} & 0\\
0 & e^{-\frac{2\pi il}{2l}}
\end{pmatrix}.
\end{equation*}

Under the same representation, we obtain
\begin{equation*}
\gamma_3 \mapsto
\begin{pmatrix}
e^{\frac{2\pi i(l+1)}{2l}} & 0\\
0 & e^{\frac{2\pi i(l-1)}{2l}}
\end{pmatrix}
\end{equation*}
and the element $\gamma_4 \in \varepsilon ^{-1}(g_4)$ is mapped to the same element as $\gamma_3^{-1}$.

The local eigenvalues of the monodromy representation $\tilde{\rho}$ mapping the simple loops $\eta_3, \eta_4$ around the points $z_3, z_4 \in \CPone$ to the elements $\gamma_3, \gamma_4 \in \Z_{2k}\times\Z_{2l}$, respectively, are given by
\begin{equation}
\label{loceigenl}
\frac{l+1}{2l}, \quad \text{ and } \quad \frac{l-1}{2l}.
\end{equation}

The computations for the case of $k$ and $l$ being both odd (or both even) numbers can be carry out analogously and the local eigenvalues of the monodromy representation $\tilde{\rho}$ are given by \eqref{loceigenk} and \eqref{loceigenl}.

We will now show how the action of $\Gamma$ on $M$ can be lifted to an action of $\Gamma$ on the vector bundle $E \to M$ where the associated family of flat $\SL(2, \mathbb{C})$-connections of the immersion $f: M \hookrightarrow \Sp^3$ is defined.

For every element $\gamma \in \Gamma$ there is a biholomorphic map, which we denote with the same symbol, $\gamma: M \to M$, where
\begin{equation*}
\gamma(p) := \gamma \cdot p, \quad p \in M.
\end{equation*}

It is possible to lift the action of $\Gamma$ on $M$ to an action of $\Gamma$ on the complex vector bundle $E \to M$ as follows: for each $\gamma \in \Gamma$ we consider the pullback bundle $\gamma^*E \to M$ of $E$, which is isomorphic to $E$. Thus, there is a representation of the group $\Gamma$ into the gauge group $\mathcal{G}$ of the complex vector bundle $E \to M$, given by
\begin{equation}
\label{Gammagaugerep}
\begin{split}
\Gamma &\to \mathcal{G}\\
\gamma &\mapsto g_\gamma,
\end{split}
\end{equation}
where $g_\gamma: E \to \gamma^*E \simeq E$.

The complex vector bundle $E\to M$, together with the action $\Gamma \times E \to E$, is called an \textit{orbifold bundle} (we refer to \cite{Ste:tesi} and \cite{Schaf:tesi} for more details about orbifold bundles).

The action of $\Gamma$ on $E \to M$ induces an action of $\Gamma$ on the space of sections of $E$. In fact, let $s$ be a section of $E$, then
\begin{equation}
\label{actiononsections}
(\gamma \cdot s)(p) := g_{\gamma^{-1}}(p)s(p), \quad p \in M.
\end{equation}

From the fact that \eqref{Gammagaugerep} is a representation, the following hold:
\begin{equation*}
\begin{split}
g_1 &= \Id\\
g_{\gamma_1, \gamma_2} &= g_{\gamma_1} g_{\gamma_2}, \quad \forall \gamma_1, \gamma_2 \in \Gamma.
\end{split}
\end{equation*}
Therefore,  \eqref{actiononsections} is a well-defined $\Gamma$-action on the space of sections of $E$.

We consider the rank 2 vector bundle $\tilde{E} := \tau^*E\to \tilde{M}$ equipped with the action of $\Gamma$ induced by the action of $\Gamma$ on the vector bundle $E\to M$.
The following proposition shows that $\Gamma$ acts also on the associated family of flat connections $\nabla^\lambda$. Moreover, the connections $\nabla^\lambda$ and $\tau^*\nabla^\lambda$ are $\Gamma$-equivariant for every $\lambda \in \C^*$.
\begin{proposition}
\label{equivariantfamilytildeE}
The associated family of flat $\SL(2, \mathbb{C})$-connections $\nabla^\lambda$ of the immersion $f: M \hookrightarrow \Sp^3$ given by \eqref{assfamconnect} is $\Gamma$-equivariant.
Moreover, the pullback connections $\tau^*\nabla^\lambda$ defined over the vector bundle $\tilde{E}\to \tilde{M}$ are $\Gamma$-equivariant, where $\tau: \tilde{M} \to M$ is the holomorphic map of degree 2 defined in Proposition \ref{tildeMprop}.
\end{proposition}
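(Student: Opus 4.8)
The plan is to deduce the $\Gamma$-equivariance of $\nabla^\lambda$ directly from the equivariance of the immersion $f$, using that $\nabla^\lambda$ is assembled entirely from $f^{-1}df$ and its type decomposition, so that any symmetry preserving $f^{-1}df$ up to a constant gauge and preserving the complex structure of $M$ automatically preserves $\nabla^\lambda$. First I would record the equivariance of $f$ itself. By construction the Lawson surface $M\subset\Sp^3$ is invariant under $G=\Z_k\times\Z_l\subset\SO(4)$, so $f$ is $G$-equivariant: $f\circ\phi_g=g\cdot f$, where $\phi_g\colon M\to M$ is the biholomorphism induced by $g$ and $g\cdot$ denotes the isometric action on $\Sp^3$. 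Identifying $\Sp^3=\SU(2)$ and writing the spin cover $\varepsilon$ through the standard action $q\mapsto A q B^{-1}$ of $(A,B)\in\SU(2)\times\SU(2)$, every $\gamma\in\Gamma$ with $\varepsilon(\gamma)=g$ satisfies
\begin{equation*}
f\circ\phi_\gamma = A_\gamma\, f\, B_\gamma^{-1}, \qquad \gamma=(A_\gamma,B_\gamma).
\end{equation*}
Both lifts $\pm\gamma$ of $g$ induce the same map, because $-\Id$ acts trivially on $\Sp^3$; this is the ambiguity that later forces the passage to $\tilde M$.

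Next I would differentiate. Since $A_\gamma,B_\gamma$ are constant, a one-line computation gives
\begin{equation*}
\phi_\gamma^*(f^{-1}df)=B_\gamma\,(f^{-1}df)\,B_\gamma^{-1}=\Ad_{B_\gamma}(f^{-1}df),
\end{equation*}
so the left factor $A_\gamma$ drops out and only the right factor $B_\gamma$ survives (this is what singles out ``the first or second factor'' in Theorem \ref{MainTheor}$(iv)$). Because $\phi_\gamma$ is a biholomorphism of $M$, it preserves the splitting $T^*M\otimes\C=K\oplus\bar K$, hence carries the $(1,0)$-part to the $(1,0)$-part and the $(0,1)$-part to the $(0,1)$-part. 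Therefore $\phi_\gamma^*\Phi=\Ad_{B_\gamma}\Phi$ and $\phi_\gamma^*\Phi^*=\Ad_{B_\gamma}\Phi^*$; the latter is consistent with $\Phi^*$ being a hermitian adjoint precisely because $B_\gamma\in\SU(2)$ is unitary, so conjugation by $B_\gamma$ commutes with $*$. Likewise $\phi_\gamma^*\nabla$ is the image of $\nabla=d+\tfrac12 f^{-1}df$ under the \emph{constant} gauge transformation $\Ad_{B_\gamma}$.

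Assembling these facts and using that the gauge $\Ad_{B_\gamma}$ does not depend on $\lambda$, I obtain, uniformly for every $\lambda\in\C^*$,
\begin{equation*}
\phi_\gamma^*\nabla^\lambda=\Ad_{B_\gamma}\cdot\nabla^\lambda=g_\gamma\cdot\nabla^\lambda,
\end{equation*}
where $g_\gamma$ is exactly the gauge automorphism of the representation \eqref{Gammagaugerep}, and where $g_\gamma\cdot$ denotes the gauge action; the homomorphism property $g_{\gamma_1\gamma_2}=g_{\gamma_1}g_{\gamma_2}$ and $g_{-\Id}=\Id$ follow from $\gamma\mapsto B_\gamma$ being the second-factor projection. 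This is the asserted $\Gamma$-equivariance of $\nabla^\lambda$. For the pullback to $\tilde M$ I would invoke Proposition \ref{tildeMprop}: $\Gamma$ acts on $\tilde M$ covering its action on $M$, i.e. $\tau\circ\tilde\phi_\gamma=\phi_\gamma\circ\tau$, so naturality of pullback yields $\tilde\phi_\gamma^*(\tau^*\nabla^\lambda)=\tau^*(\phi_\gamma^*\nabla^\lambda)=(\tau^*g_\gamma)\cdot(\tau^*\nabla^\lambda)$, which is the $\Gamma$-equivariance of $\tau^*\nabla^\lambda$ on $\tilde E=\tau^*E$.

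The main obstacle I anticipate is not the computation but the bookkeeping of identifications: one must verify that the abstract bundle isomorphism $g_\gamma\colon E\to\gamma^*E\simeq E$ defining the orbifold structure in \eqref{Gammagaugerep} genuinely equals the constant gauge $\Ad_{B_\gamma}$ coming from the isometry, rather than differing by an extra automorphism of $E$; equivalently, that the lift of the $\SO(4)$-action to $E$ has been fixed compatibly with the frame in which $\nabla=d+\tfrac12 f^{-1}df$ is written. Tracking the residual $-\Id$ — which acts trivially on both $M$ and $\Sp^3$ but must still be accounted for on $E$ — is the delicate point, and it is exactly the issue resolved by passing to the double cover $\tilde M$, on which $\Gamma$ acts faithfully.
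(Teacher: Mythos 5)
Your proposal is correct and follows the same overall strategy as the paper's proof: show that the $\Gamma$-action on $\omega = f^{-1}df$ is by a constant ($\lambda$-independent) conjugation, use that biholomorphisms of $M$ preserve the $(1,0)/(0,1)$ decomposition to transport this to $\Phi$ and $\Phi^*$, and deduce the statement for $\tau^*\nabla^\lambda$ from the commutativity $\tau\circ\tilde\phi_\gamma = \phi_\gamma\circ\tau$ together with naturality of pullback (the paper's computation for this last step is word-for-word yours). The one genuine difference is how the conjugation is derived. The paper normalizes $f(p)=\Id$ at a point $p$ with non-trivial stabilizer, deduces $a=b$ there from $\Id = af(p)b^{-1}$, and consequently only asserts the formula $\gamma^*\omega = b\omega b^{-1}$ \emph{locally} around such points. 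You instead observe that in the Maurer--Cartan form the left factor cancels identically,
\begin{equation*}
\bigl(A_\gamma f B_\gamma^{-1}\bigr)^{-1} d\bigl(A_\gamma f B_\gamma^{-1}\bigr) = B_\gamma\,(f^{-1}df)\,B_\gamma^{-1},
\end{equation*}
which yields the conjugation formula globally on $M$, with no normalization and no appeal to fixed points; it also makes transparent why only the second $\SU(2)$-factor survives, the fact later invoked in Theorem \ref{MainTheor}$(iv)$. This is a cleaner and slightly stronger version of the same computation, since the paper's argument as written is literally a local statement near fixed points that is then asserted to give equivariance. Your closing caveat — that the abstract lift $g_\gamma$ of \eqref{Gammagaugerep} must be checked to coincide with the constant gauge $\Ad_{B_\gamma}$ rather than differ by a bundle automorphism — is also well placed: the paper passes over this identification silently, so flagging it (rather than resolving it) puts you on par with, not behind, the published proof.
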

\begin{proof}
Recall that $\nabla^\lambda = \nabla + \lambda^{-1}\Phi - \lambda\Phi^*$, where $\nabla  = d + \frac{1}{2}f^{-1}df$ and $\Phi, \Phi^*$ are, respectively, the $(1, 0)$ and $(0,1)$-components of the 1-form $\omega = f^{-1}df$.

Let $p \in M$ be a point with non trivial stabilizer group with respect to the action of $\Gamma$ on $M$. It is possible to normalize the immersion $f: M \hookrightarrow \Sp^3$ such that
\begin{equation*}
 f(p) = \Id \in \Sp^3 \simeq \SU(2).
\end{equation*}

Consider the action of $\gamma \in \Gamma$ on a neighbourhood of $p \in M$, then we have
\begin{equation*}
\Id = f(p) = (\gamma \circ f)(p) = af(p)b^{-1} = ab^{-1} \Rightarrow a = b, \quad a, b\in \SU(2).
\end{equation*}

Therefore, locally around a point $p$ with non trivial stabilizer group, $\Gamma$ acts on $M$ by the conjugation of an element $b \in \SU(2)$.

Let $\omega = f^{-1}df$ be the connection 1-form of the connection $\nabla$ on $E \to M$. Around the point $p \in M$, $\Gamma$ acts on $\omega$ as
\begin{equation}
\label{Gammaactionforms}
\begin{split}
\gamma^*\omega &= (\gamma \circ f)^{-1}d(\gamma \circ f)\\
&=bf^{-1}b^{-1}bdfb^{-1}\\
&= bf^{-1}dfb^{-1} \\
&=b\omega b^{-1}.
\end{split}
\end{equation}

Thus, the local action of $\Gamma$ on the connection $\nabla$ is given by the constant gauge
\begin{equation*}
\gamma \cdot \nabla = \nabla \cdot b = b \nabla b^{-1}.
\end{equation*}

From the fact that $\Phi$ and $\Phi^*$ are, respectively, the $(1,0)$ and $(0,1)$ components of the 1-form $\omega$ and from \eqref{Gammaactionforms} we conclude that $\Gamma$ acts, locally, on the associated family of flat $\SL(2, \mathbb{C})$-connections $\nabla^\lambda = \nabla + \lambda^{-1}\Phi - \lambda \Phi^*$ of the immersion $f: M \to \Sp^3$ as
\begin{equation}
\label{famflatconnectionsequivariance}
\gamma \cdot \nabla^\lambda = \nabla^\lambda \cdot b.
\end{equation}

The only thing left to prove is the second part of the statement. From the construction of the holomorphic map $\tau$ and the Riemann surface $\tilde{M}$, we have the following commutative diagram
\begin{equation}
\label{tauaction}
\begin{tikzcd}
&\Gamma \times \tilde{M} \arrow[d, "{(\Id, \tau )}"] \arrow[r, "\gamma"] &\tilde{M} \arrow[d, "\tau"]\\
&\Gamma \times M \arrow[r, "\gamma"] &M
\end{tikzcd}
\end{equation}

Given $\gamma \in \Gamma$, from the commutativity of the diagram \eqref{tauaction}, the equation \eqref{famflatconnectionsequivariance} and the properties of the pullback we obtain
\begin{equation*}
\begin{split}
\gamma^*( \tau^*\nabla^\lambda) &= (\tau \circ \gamma)^*\nabla^\lambda = (\gamma \circ \tau)^*\nabla^\lambda\\
& = \tau^*(\gamma^*\nabla^\lambda)= \tau^*(\nabla^\lambda \cdot b) \\
&= \tau^*(b^{-1}\nabla^\lambda b) = b^{-1}\tau^*\nabla^\lambda b\\
& = (\tau^*\nabla^\lambda) \cdot b.
\end{split}
\end{equation*}
\end{proof}

\section{Parabolic bundles and logarithmic connections}
\label{SecParbund}
Following \cite{Bis:tesi} we want to define a \textit{parabolic bundle} over $\CPone$ which makes possible the construction of a $\lambda$-family of Fuchsian systems on $\CPone$ for $\lambda \in \mathbb{C}$.

A \textit{parabolic structure} on a holomorphic bundle $E \to M$ over a compact Riemann surface $M$ is given by a filtration of sub-bundles 
\begin{equation}
\label{parfiltration}
E = F_1(E) \supset F_2(E) \supset \dots \supset F_l(E) \supset F_{l+1}(E) = E(-D),
\end{equation}
where $D$ is an effective divisor on $M$, together with a system of \textit{parabolic weights} $\{\alpha_1, \dots, \alpha_l\}$, such that 
\begin{equation*}
0 \leq \alpha_1 < \dots < \alpha_l < 1.
\end{equation*}
The weight $\alpha_j$ corresponds to the sub-bundle $F_j(E) \subset E$.
A holomorphic vector bundle $E$ equipped with a parabolic structure is called a \textit{parabolic bundle}.

Given a parabolic bundle $E \to M$, the sub-bundles $E_t := F_j(E)(- \lfloor t\rfloor D), t \in \R$ give a decreasing, left continuous filtration of sub-bundles of $E$. We say that the filtration $\{E_t\}$ has a \textit{jump} in $t\in \R$ if, for any $\varepsilon >0$ we have $E_{t+\varepsilon} \neq E_t$.

For $M$ being the Lawson surface of genus $(k-1)(l-1)$, $\Gamma = \Z_{2k}\times\Z_{2l}$ has order $4kl$. Therefore, the push-forward bundle $\tilde{\pi}_*\tilde{E} \to \CPone$ of $\tilde{E}\to \tilde{M}$ under the map $\tilde{\pi}$ is a rank $8kl$ bundle over the Riemann sphere.

The action of $\Gamma$ on $\tilde{E}\to \tilde{M}$ induces an action of $\Gamma$ on $\tilde{\pi}_*\tilde{E} \to \CPone$ and it is possible to consider the $\Gamma$-invariant sections of $\tilde{\pi}_*\tilde{E}$. Biswas \cite{Bis:tesi} showed that these sections generate a rank 2 $\Gamma$-invariant vector bundle which we will denote with $(\tilde{\pi}_*\tilde{E})^\Gamma \to \CPone$. This will be the rank 2 vector bundle over $\CPone$ where the family of Fuchsian systems will be defined.

First, we want to define a parabolic structure on $(\tilde{\pi}_*\tilde{E})^\Gamma \to \CPone$. Following \cite{Bis:tesi}, we have that the sub-bundles
\begin{equation}
\label{BisFiltr}
\tilde{E}_t := \Bigg( \tilde{\pi}_*\bigg( \tilde{E} \otimes L \Big( \sum_{j=1}^4 \lfloor -2t  d_j \rfloor\tilde{\pi}^{-1}(z_j) \Big) \bigg) \Bigg)^\Gamma,\quad t \in \mathbb{R},
\end{equation}
give a parabolic filtration on $(\tilde{\pi}_*\tilde{E})^\Gamma \to \CPone$, where $z_1, \dots, z_4$ are the branch points of the map $\tilde{\pi}: \tilde{M} \to \CPone$ and $2d_j$ is the order of the stabilizer group $\Gamma_{p_j^k}$ of the points $p_j^k \in \tilde{\pi}^{-1}(z_j) \subset \tilde{M}, j = 1, \dots, 4$.
Moreover, if $\alpha_0, \dots, \alpha_l$ are the values of $t \in \R$ such that $\{\tilde{E}_t\}$ has a jump, the filtration \eqref{BisFiltr} together with the weights $\alpha_0, \dots, \alpha_l$ defines a parabolic structure on $(\tilde{\pi}_*\tilde{E})^\Gamma \to \CPone$.

Next, we want to construct another parabolic structure, equivalent to the one obtained by Biswas, on $(\tilde{\pi}_*\tilde{E})^\Gamma \to \CPone$ using a logarithmic connection.


Following \cite{BisHe:tesi}, we prove the next result, which provides the existence of a logarithmic connection on $(\tilde{\pi}_*\tilde{E})^\Gamma$.

\begin{proposition}
\label{equivlogconnection}
Let $M$ be the Lawson surface of genus $(k-1)(l-1)$ and $\tilde{\pi}: \tilde{M} \to \CPone$ the holomorphic map defined in Proposition \ref{tildeMprop}. There exists a logarithmic connection $\tilde{\nabla}$ on the $\Gamma$-invariant vector bundle $(\tilde{\pi}_*\tilde{E})^\Gamma \to \CPone$.
\end{proposition}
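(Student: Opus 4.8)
The plan is to descend the $\Gamma$-equivariant connection on $\tilde{E} \to \tilde{M}$ to the invariant push-forward $(\tilde{\pi}_*\tilde{E})^\Gamma \to \CPone$, and to verify by a local computation at the four branch points that the descended connection acquires at worst logarithmic singularities there. I follow the strategy of \cite{BisHe:tesi}.

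First, away from the branch points the map $\tilde{\pi}$ restricts to an unramified $\Gamma$-covering $\tilde{M} \smallsetminus \tilde{\pi}^{-1}(\{z_1, \dots, z_4\}) \to \CPone \smallsetminus \{z_1, \dots, z_4\}$. Fixing $\lambda$ and using that $\tau^*\nabla^\lambda$ is $\Gamma$-equivariant (Proposition \ref{equivariantfamilytildeE}), I push this connection forward and restrict it to the rank 2 invariant subbundle. Concretely, over an evenly covered open set $V$ the $\Gamma$-invariant sections of $\tilde{\pi}_*\tilde{E}$ are determined by their restriction to a single sheet; equivariance guarantees that the transported connection is independent of the chosen sheet and preserves invariance, producing a genuine connection $\tilde{\nabla}$ on $(\tilde{\pi}_*\tilde{E})^\Gamma$ over the punctured sphere. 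This part is routine.

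The heart of the proof is the local analysis at a branch point $z_j$. I choose a coordinate $z$ centred at $z_j$ and, over a point $p \in \tilde{\pi}^{-1}(z_j)$ whose cyclic stabilizer $\Gamma_p$ has order $2d_j$, a coordinate $w$ with $w^{2d_j}=z$. Diagonalizing the $\Gamma_p$-action on $\tilde{E}$ near $p$ through its equivariant structure \eqref{actiononsections}, the generator acts in a holomorphic eigenframe $e_1, e_2$ with eigenvalues $e^{2\pi i a_1/(2d_j)}, e^{2\pi i a_2/(2d_j)}$, and by the computations preceding this proposition these exponents correspond to the local eigenvalues \eqref{loceigenk} and \eqref{loceigenl}. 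A local frame of the invariant push-forward is then $w^{b_1}e_1, w^{b_2}e_2$, with $b_i \in \{0, \dots, 2d_j-1\}$ determined by the $a_i$ modulo $2d_j$. Writing the upstairs holomorphic connection as $d + A(w)\,dw$ with $A$ holomorphic and conjugating by $\mathrm{diag}(w^{b_1}, w^{b_2})$, I obtain an extra term $\mathrm{diag}(b_1, b_2)\,\tfrac{dw}{w}$; since $\tfrac{dw}{w} = \tfrac{1}{2d_j}\tfrac{dz}{z}$, this is a simple pole in $z$, while the monomials of $A$ compatible with $\Gamma_p$-invariance contribute only holomorphic or first-order terms. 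Thus $\tilde{\nabla}$ has at most a logarithmic pole at $z_j$, with residue eigenvalues $\tfrac{b_1}{2d_j}, \tfrac{b_2}{2d_j}$ matching \eqref{loceigenk}--\eqref{loceigenl}.

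Combining the two steps, $\tilde{\nabla}$ is holomorphic on $\CPone \smallsetminus \{z_1, \dots, z_4\}$ and has only first-order poles at the branch points, hence is a logarithmic connection on $(\tilde{\pi}_*\tilde{E})^\Gamma \to \CPone$, as claimed. The main obstacle is the branch-point analysis: one must match the invariant local frame to the stabilizer eigenbasis and check that the ramified change of coordinate $w^{2d_j}=z$, together with the congruence conditions on the monomials of $A$ forced by $\Gamma_p$-invariance, produces no pole worse than first order. Well-definedness of the rank 2 invariant local frame itself rests on Biswas's result \cite{Bis:tesi}.
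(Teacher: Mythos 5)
Your proof is correct, but it follows a genuinely different route from the paper's. The paper never performs a local computation at the branch points inside the proof: instead it argues sheaf-theoretically, composing the $\Gamma$-equivariant connection $\tau^*\nabla$ with the natural inclusion $H^0(U, K_{\tilde{M}}) \hookrightarrow H^0(U, \tilde{\pi}^*(K_{\CPone}\otimes L(D)))$ of \cite[Lemma 2.1]{BisHe:tesi} (which exists because the ramification divisor of $\tilde{\pi}$ is bounded by $\tilde{\pi}^*D$), pushing forward, and then identifying $\tilde{\pi}_*\bigl(\tilde{E}\otimes\tilde{\pi}^*(K_{\CPone}\otimes L(D))\bigr)$ with $\tilde{\pi}_*\tilde{E}\otimes K_{\CPone}\otimes L(D)$ via the projection formula \cite[Proposition 4.2]{Hit2:tesi}; the logarithmic structure is thus built in from the twist by $L(D)$, and the only remaining checks are the Leibniz rule (inherited from $\tau^*\nabla$) and the fact that equivariance makes the resulting operator preserve $\Gamma$-invariant sections, exactly as in your ``routine'' first step. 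Your approach replaces this abstract argument with descent over the unramified locus plus an explicit eigenframe computation at each $z_j$: conjugating by $\mathrm{diag}(w^{b_1},w^{b_2})$, using $w^{2d_j}=z$, and checking the congruence constraints on the monomials of $A(w)$. This is more work and requires the careful bookkeeping you acknowledge, but it buys something the paper's proof does not: the residue eigenvalues $b_i/(2d_j)$ fall out as a byproduct, whereas the paper must run essentially your local computation separately, after the proposition, to obtain \eqref{locresiduetildenabla} (and indeed in the frame $(w^{d_j-1}s, w^{d_j+1}t)$, i.e.\ $b_1 = d_j-1$, $b_2 = d_j+1$, matching your exponents). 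Both arguments lean on Biswas \cite{Bis:tesi} for the fact that the $\Gamma$-invariant sections of $\tilde{\pi}_*\tilde{E}$ form a rank $2$ bundle, so neither is more self-contained on that point.
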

\begin{proof}

From \cite[Lemma 2.1]{BisHe:tesi} we have an inclusion map
\begin{equation}
\label{inclusionmap}
H^0(U, K_{\tilde{M}}) \hookrightarrow H^0(U, \tilde{\pi}^*(K_{\CPone} \otimes L(D))),
\end{equation}
where $D = z_1 + \dots + z_4$ is the branch divisor of the map $\tilde{\pi}: \tilde{M} \to \CPone$.

Let $U \subset \tilde{M}$ be an open set and $\tau^*\nabla: H^0(U, \tilde{E})\to H^0(U, \tilde{E} \otimes K_{\tilde{M}})$ be the $\Gamma$-equivariant connection on the holomorphic vector bundle $\tilde{E} \to \tilde{M}$.

The composition of $\tau^*\nabla$ with the inclusion map \eqref{inclusionmap} gives a map
\begin{equation*}
h: H^0(U, \tilde{E})\to H^0(U, \tilde{E} \otimes \tilde{\pi}^*(K_{\CPone} \otimes L(D))).
\end{equation*}

The map $h$ induces a map on the push-forward bundles
\begin{equation*}
\tilde{\pi}_*(\tau^*\nabla): H^0(\tilde{U}, \tilde{\pi}_*\tilde{E})\to H^0(\tilde{U}, \tilde{\pi}_*(\tilde{E}\otimes \tilde{\pi}^*(K_{\CPone} \otimes L(D)))),
\end{equation*}
where $\tilde{U} \subset \CPone$ is an open set such that $\tilde{\pi}(U) = \tilde{U}$.

Moreover, from \cite[Proposition 4.2]{Hit2:tesi} we obtain
\begin{equation*}
H^0(\tilde{U}, \tilde{\pi}_*(\tilde{E}\otimes \tilde{\pi}^*(K_{\CPone} \otimes L(D)))) = H^0(\tilde{U}, \tilde{\pi}_*\tilde{E} \otimes K_{\CPone} \otimes L(D)).
\end{equation*}

We can conclude that the map 
\begin{equation*}
\tilde{\pi}_*(\tau^*\nabla): H^0(\tilde{U}, \tilde{\pi}_*\tilde{E})\to H^0(\tilde{U}, \tilde{\pi}_*\tilde{E} \otimes K_{\CPone} \otimes L(D))
\end{equation*} 
satisfies the Leibniz rule, since $\tau^*\nabla$ satisfies it and gives a logarithmic connection on the holomorphic vector bundle $\tilde{\pi}_*\tilde{E}$.

Finally, the logarithmic connection $\tilde{\pi}_*(\tau^*\nabla)$ induces a logarithmic connection on the $\Gamma$-invariant bundle $(\tilde{\pi}_*\tilde{E})^\Gamma$. In fact, let $s \in H^0(\tilde{U}, \tilde{\pi}_*\tilde{E})$ be a $\Gamma$-invariant section and $\gamma \in \Gamma$, then 
\begin{equation*}
\begin{split}
\tilde{\pi}_*(\tau^*\nabla) (s) &= \tilde{\pi}_*(\tau^*\nabla)(\gamma \cdot s) = \tau^*\nabla(\gamma \cdot s)\\
&= \gamma^*(\tau^*\nabla)(s) = \gamma^*(\tilde{\pi}_*(\tau^*\nabla))(s),
\end{split}
\end{equation*}
where we have used the fact that $s$ can be considered as a section of $\tilde{E}$ and that $\tau^*\nabla$ is $\Gamma$-equivariant.

We denote with $\tilde{\nabla}$ the logarithmic connection on $(\tilde{\pi}_*\tilde{E})^\Gamma$ induced by the connection $\tilde{\pi}_*(\tau^*\nabla)$.

\end{proof}

In order to compute the local residues of the logarithmic connection $\tilde{\nabla}$ at the branch points $z_1, \dots, z_4 \in \CPone$ of the map $\tilde{\pi}$, consider $p \in \tilde{\pi}^{-1}(z_j) \subset \tilde{M}$. Let $2d_j-1$ be the branch order of $p$ \footnote{We recall that $d_j$ is equal to $k$ for $j=1, 2$ and it is equal to $l$ for $j=3, 4$}. Then, it is possible to find a local coordinate $w$ around $p$ and a local coordinate $z$ around $z_j$ such that
$$
w^{2d_j} = z.
$$

Up to shrinking the domain of the local coordinate $w$, we can consider it defined on a trivializing set $U \subset \tilde{M}$ for the holomorphic vector bundle $\tilde{E} \to \tilde{M}$. If $(s, t)$ is a holomorphic local frame for $\tilde{E} \to U$,  and the stabilizer group $\Gamma_p$ of the point $p \in \tilde{M}$ under the action of $\Gamma$ acts on $U$ as the cyclic group $\Z_{2d_j}$, a simple computation shows that a local frame around $z_j \in \CPone$ for $(\tilde{\pi}_*\tilde{E})^\Gamma$ is given by $(w^{d_j-1}s, w^{d_j+1}t)$.

Therefore, locally we can write the logarithmic connection $\tilde{\nabla}$ as
\begin{equation*}
\tilde{\nabla} = d + 
\begin{pmatrix}
(d_j-1)\frac{dw}{w} & 0\\
0 & (d_j+1)\frac{dw}{w}
\end{pmatrix}
\end{equation*}
and we conclude that, with respect to the frame $(w^{d_j-1}s, w^{d_j+1}t)$, the local residue of $\tilde{\nabla}$ at $z_j$ is given by
\begin{equation}
\label{locresiduetildenabla}
\Res_{z_j}\tilde{\nabla} = 
\begin{pmatrix}
\frac{(d_j-1)}{2d_j} & 0\\
0 & \frac{(d_j+1)}{2d_j}
\end{pmatrix}.
\end{equation}

Moreover, the Riemann-Hilbert (\cite[Theorem 3.6]{Heu:tesi}) correspondence and the construction of $\tilde{\nabla}$ imply the following
\begin{proposition}
\label{gaugeequivconnections}
The connection $\tau^*\nabla$ on the holomorphic vector bundle $\tilde{E}\to \tilde{M}$ is gauge equivalent to the pullback connection $\tilde{\pi}^*\tilde{\nabla}$ of the logarithmic connection defined on the $\Gamma$-invariant vector bundle $(\tilde{\pi}_*\tilde{E})^\Gamma\to \CPone$, under the holomorphic map $\tilde{\pi}: \tilde{M} \to \CPone$.
\end{proposition}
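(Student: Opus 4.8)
The plan is to prove the gauge equivalence by matching the monodromy representations of the two connections on $\tilde{M}$ and then invoking the Riemann-Hilbert correspondence. The guiding observation is that $\tilde{\nabla}$ was constructed in Proposition \ref{equivlogconnection} as exactly the connection induced by $\tau^*\nabla$ via the push-forward and the $\Gamma$-invariant construction, so away from the branch points the canonical descent morphism $\tilde{\pi}^*(\tilde{\pi}_*\tilde{E})^\Gamma \to \tilde{E}$ is an isomorphism intertwining $\tilde{\pi}^*\tilde{\nabla}$ with $\tau^*\nabla$. On the unramified locus both connections are therefore flat with the same monodromy, namely $\rho \circ \tau_*$, where $\rho$ is the monodromy of $\nabla$ as in \eqref{Mmonodromy}. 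The whole difficulty is thus concentrated at the ramification points of $\tilde{\pi}$.

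First I would analyse $\tilde{\pi}^*\tilde{\nabla}$ near a point $p \in \tilde{\pi}^{-1}(z_j)$ using the local coordinate $w$ with $w^{2d_j}=z$. By the residue formula \eqref{locresiduetildenabla}, the local monodromy of $\tilde{\nabla}$ around $z_j$ is conjugate to $\diag\!\big(e^{2\pi i (d_j-1)/(2d_j)},\, e^{2\pi i (d_j+1)/(2d_j)}\big)$. Since $\tilde{\pi}$ has ramification index $2d_j$ at $p$, a small loop around $w=0$ is the lift of the $2d_j$-th power of a loop around $z_j$, so the local monodromy of $\tilde{\pi}^*\tilde{\nabla}$ is this matrix raised to the power $2d_j$, which equals $\diag(e^{2\pi i(d_j-1)}, e^{2\pi i(d_j+1)}) = \Id$. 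Equivalently, pulling back $\tfrac{dz}{z} = 2d_j\,\tfrac{dw}{w}$ turns the residue eigenvalues $\tfrac{d_j\pm1}{2d_j}$ into the integers $d_j-1$ and $d_j+1$, and the diagonal gauge $\diag(w^{d_j-1}, w^{d_j+1})$ relating the local frames $(w^{d_j-1}s, w^{d_j+1}t)$ and $(s,t)$ removes the apparent pole. Hence $\tilde{\pi}^*\tilde{\nabla}$ has trivial local monodromy at every point of $\tilde{\pi}^{-1}(\{z_1,\dots,z_4\})$ and extends across them as a genuine flat connection on the compact surface $\tilde{M}$.

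Combining the two steps, both $\tau^*\nabla$ and the extended $\tilde{\pi}^*\tilde{\nabla}$ are flat connections on $\tilde{M}$ whose restrictions to $\tilde{M}\smallsetminus\tilde{\pi}^{-1}(\{z_1,\dots,z_4\})$ carry one and the same monodromy representation $\pi_1(\tilde{M}) \to \SL(2,\C)$, since the puncture monodromies were just shown to be trivial for both. The Riemann-Hilbert correspondence \cite[Theorem 3.6]{Heu:tesi} states that a flat connection on a compact Riemann surface is determined up to gauge equivalence by its monodromy representation, and applying it to these two connections produces the desired gauge transformation. I expect the main obstacle to be precisely the ramification analysis: one must check that $\tilde{\pi}^*\tilde{\nabla}$ is genuinely regular rather than merely apparently singular at $\tilde{\pi}^{-1}(\{z_j\})$, which is guaranteed by the integrality $\tfrac{d_j\pm1}{2d_j}\cdot 2d_j = d_j\pm 1 \in \Z$ together with the distinctness of the two residue eigenvalues; once the monodromies are seen to coincide, the rest is formal.
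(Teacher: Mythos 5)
Your proof is correct and takes essentially the same route as the paper, which justifies the proposition by precisely the two ingredients you use: the construction of $\tilde{\nabla}$ from $\tau^*\nabla$ (whence equal monodromies away from the branch points and, by your residue-integrality computation, trivial local monodromies at the ramification points, so the apparent singularities of $\tilde{\pi}^*\tilde{\nabla}$ are removable by the singular diagonal gauge) together with the Riemann--Hilbert correspondence \cite[Theorem 3.6]{Heu:tesi}. One cosmetic slip that plays no role in the argument: the common monodromy is the holonomy representation of the flat connection $\nabla$ composed with $\tau_*$, not $\rho\circ\tau_*$, since the $\rho$ of \eqref{Mmonodromy} is the $G$-valued monodromy of the covering $\pi$ rather than the holonomy of $\nabla$.
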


Having a logarithmic connection on $(\tilde{\pi}_*\tilde{E})^\Gamma\to \CPone$ gives us the possibility to define a parabolic structure on the same holomorphic vector bundle as follows:

Let $\Res_{z_j}\tilde{\nabla} \in \End((\tilde{\pi}_*\tilde{E})^\Gamma)_{z_j}$ be the local residue of $\tilde{\nabla}$ at the branch point $z_j \in \CPone$ and $\mu_1^j, \mu_2^j$ its eigenvalues, with $\mu_2^j \geq \mu_1^j$.

The eigenline $L_j := \Ker(\Res_{z_j}\tilde{\nabla} - \mu_2^j \Id)$ of $\Res_{z_j}\tilde{\nabla}$ corresponding to $\mu_2^j$ is contained in the fiber of $(\tilde{\pi}_*\tilde{E})^\Gamma$ at $z_j$ and we obtain a filtration
\begin{equation}
\label{parfiltrlogconnection}
0 \subset L_j \subset ((\tilde{\pi}_*\tilde{E})^\Gamma)_{z_j}
\end{equation}
We will call the eigenline $L_j$ the \textit{parabolic line} of $\tilde{\nabla}$ at the branch point $z_j \in \CPone$.

The filtration \eqref{parfiltrlogconnection} together with the system of weights $\{\mu_1^j, \mu_2^j\}$ gives a parabolic structure on $(\tilde{\pi}_*\tilde{E})^\Gamma$.

We want to show that the parabolic structure on $(\tilde{\pi}_*\tilde{E})^\Gamma$ defined via \eqref{parfiltrlogconnection} is equivalent to the parabolic structure defined by Biswas in \cite{Bis:tesi}, using the parabolic filtration \eqref{BisFiltr}.

%

The parabolic filtration for the vector bundle $(\tilde{\pi}_*\tilde{E})^\Gamma \to \CPone$ defined by Biswas is given by (cf. \eqref{BisFiltr})
\begin{equation*}
(\tilde{\pi}_*\tilde{E})^\Gamma_t = \Bigg( \tilde{\pi}_*\bigg( \tilde{E} \otimes L\Big( \sum_{j=0}^l \lfloor -2kt \rfloor P_{2j+1} + \sum_{j=0}^l \lfloor -2kt \rfloor P_{2j+2} + \sum_{j=0}^k \lfloor -2lt \rfloor Q_{2j+1} + \sum_{j=0}^k \lfloor -2lt \rfloor Q_{2j+2} \Big) \bigg) \Bigg)^\Gamma,
\end{equation*}
for $t \in \mathbb{R}$.

We study in detail the situation around the points $P_{2r+1}$. For the points $P_{2r+2}$ the computations are the same and for the points $Q_s$ it is only sufficient to use the integer $l$ instead of $k$.

There are three cases, according to the value of $t \in \mathbb{R}$, to consider
\begin{itemize}
\item
$-\frac{k-1}{2k} < t \leq \frac{k-1}{2k}$

\begin{equation*}
(\tilde{\pi}_*\tilde{E})^\Gamma_{t_1} = \Bigg( \tilde{\pi}_*\bigg( \tilde{E} \otimes L\Big( \sum_{r=0}^l -(k-1) P_{2r+1} \Big) \bigg) \Bigg)^\Gamma.
\end{equation*}

The bundle $(\tilde{\pi}_*\tilde{E})^\Gamma_{t_1}$ is generated by the sections of $\tilde{\pi}_*\tilde{E}$ which are $\Gamma_{P_{2r+1}}$-invariant and have at least a $(k-1)$-order zero at the points $P_{2r+1}$ when considered as sections of the holomorphic vector bundle $\tilde{E}$.

We can consider the  local frame, around the point $z_1 \in \CPone$, of $(\tilde{\pi}_*\tilde{E})^\Gamma_{t_1}$ given by
\begin{equation}
\label{locframeEt1}
(w^{k-1}s, w^{k+1}t),
\end{equation}
where $(s, t)$ is a holomorphic local frame of $\tilde{E}\to \tilde{M}$ around $P_{2r+1}$. 

A local frame for $(\tilde{\pi}_*\tilde{E})^\Gamma$ around $z_1 \in \CPone$ is given by $(w^{k-1}s, w^{k+1}t)$. Therefore, the bundles $(\tilde{\pi}_*\tilde{E})^\Gamma$ and $(\tilde{\pi}_*\tilde{E})^\Gamma_{t_1}$ coincides.

\item
$\frac{k-1}{2k} < t \leq \frac{k+1}{2k}$

\begin{equation*}
(\tilde{\pi}_*\tilde{E})^\Gamma_{t_2} = \Bigg( \tilde{\pi}_*\bigg( \tilde{E} \otimes L\Big( \sum_{r=0}^l -(k+1) P_{2r+1} \Big) \bigg) \Bigg)^\Gamma.
\end{equation*}

Similarly to the previous case, the bundle $(\tilde{\pi}_*\tilde{E})^\Gamma_{t_2}$ is generated by the $\Gamma_{P_{2r+1}}$-invariant sections of $\tilde{\pi}_*\tilde{E}$ having at least a $(k+1)$-order zero at $P_{2r+1}$ when considered as sections of the bundle $\tilde{E}$.

We can consider the  local frame for $(\tilde{\pi}_*\tilde{E})^\Gamma_{t_2}$ given by
\begin{equation}
\label{locframeEt2}
(w^{2k}s, w^{k+1}t),
\end{equation}
where $(s, t)$ is a holomorphic local frame for $\tilde{E}$ around $P_{2r+1}$.

Comparing the local frame \eqref{locframeEt1} for $(\tilde{\pi}_*\tilde{E})^\Gamma_{t_1}$ with the local frame \eqref{locframeEt2} for $(\tilde{\pi}_*\tilde{E})^\Gamma_{t_2}$, we observe that we have an inclusion of vector bundles
\begin{equation*}
(\tilde{\pi}_*\tilde{E})^\Gamma_{t_2} \subset (\tilde{\pi}_*\tilde{E})^\Gamma_{t_1}.
\end{equation*}

Moreover, the parabolic line at the point $z_1\in \CPone$ (the fiber of $(\tilde{\pi}_*\tilde{E})^\Gamma_{t_2}$) is given, with respect to these local frames, by
\begin{equation*}
[0: \tilde{t}(z_1)],
\end{equation*}
where $\tilde{t} := w^{k+1}t$.

We recall that the local residue of the logarithmic connection $\tilde{\nabla}$ on $(\tilde{\pi}_*\tilde{E})^\Gamma$ at the point $z_1$ can be written as 
\begin{equation*}
\Res_{z_1}\tilde{\nabla} = 
\begin{pmatrix}
\frac{(k-1)}{2k} & 0\\
0 & \frac{(k+1)}{2k}
\end{pmatrix}
\end{equation*}
with respect to the local frame $(\tilde{s}, \tilde{t}) = (w^{k-1}s, w^{k+1}t)$ of $(\tilde{\pi}_*\tilde{E})^\Gamma$ around $z_1 \in \CPone$.

Therefore, the parabolic line $L_1$ given by \eqref{parfiltrlogconnection}, is the line $[0: \tilde{t}(z_1)]$. We can conclude that the parabolic line at $z_1$ for the parabolic structures on $(\tilde{\pi}_*\tilde{E})^\Gamma$ defined via \eqref{BisFiltr} and via the logarithmic connection $\tilde{\nabla}$ are the same.

\item
$\frac{k+1}{2k} < t \leq 1+ \frac{k-1}{2k}$

\begin{equation*}
(\tilde{\pi}_*\tilde{E})^\Gamma_{t_3} = \Bigg( \tilde{\pi}_*\bigg( \tilde{E} \otimes L\Big( \sum_{r=0}^l -2\tilde{k} P_{2r+1} \Big) \bigg) \Bigg)^\Gamma, \quad \tilde{k} \geq k.
\end{equation*}

In this case we have to consider $\Gamma_{P_{2r+1}}$-invariant sections of $(\tilde{\pi}_*\tilde{E})$ which have at least a $2k$-order zero at $P_{2r+1}$ when considered as sections of $\tilde{E}$.
A local frame for $(\tilde{\pi}_*\tilde{E})^\Gamma_{t_3}$ is given by
\begin{equation*}
(w^{2k}s, w^{2k}t), 
\end{equation*}
where $(s, t)$ are as above.
We recall that the local coordinates $w$ and $z$ satisfy $w^{2k} = z$. Therefore, the bundle $(\tilde{\pi}_*\tilde{E})^\Gamma_{t_3}$ is locally generated by the sections $(zs, zt)$, which give a holomorphic local frame for the bundle
\begin{equation*}
(\tilde{\pi}_*\tilde{E})^\Gamma \otimes L(-z_1) = (\tilde{\pi}_*\tilde{E})^\Gamma(-z_1).
\end{equation*}
\end{itemize}

We can conclude that the parabolic filtration, given by $\{(\tilde{\pi}_*\tilde{E})^\Gamma_{t_i}\}$, $i=1, \dots, 3$, at the branch points $z_1, \dots, z_4 \in \CPone$ of the holomorphic map $\tilde{\pi}:\tilde{M} \to \CPone$, is given by
\begin{equation*}
((\tilde{\pi}_*\tilde{E})^\Gamma)_{z_j} \subset L_j \subset ((\tilde{\pi}_*\tilde{E})^\Gamma(-z_j))_{z_j},\quad j=1, \dots, 4,
\end{equation*}
where $L_j$ is the parabolic line defined in \eqref{parfiltrlogconnection}.

By patching together the local frames for the bundles $(\tilde{\pi}_*\tilde{E})^\Gamma_{t_j}, j = 1, 2, 3$ around the branch points $z_1, \dots, z_4 \in \CPone$ via the transition functions, it is possible to show that the global parabolic structure induced by the logarithmic connection $\tilde{\nabla}$ is equivalent to the parabolic structure defined by Biswas in \cite{Bis:tesi}. We reassume this fact in the following

\begin{proposition}
\label{equivalenceparstructures}
The logarithmic connection $\tilde{\nabla}$ on the $\Gamma$-invariant vector bundle $(\tilde{\pi}_*\tilde{E})^\Gamma\to \CPone$ given by Proposition \ref{equivlogconnection}, induces a parabolic structure on $(\tilde{\pi}_*\tilde{E})^\Gamma$ via the parabolic filtration \eqref{parfiltrlogconnection}, with parabolic weights given by the eigenvalues of the local residues of $\tilde{\nabla}$ at the branch points $z_1, \dots, z_4$ of the map $\tilde{\pi}$.

Moreover, this parabolic structure is equivalent to the parabolic structure defined by Biswas \cite{Bis:tesi} using \eqref{BisFiltr}, with jumps at the values of $t$ equal to the eigenvalues of the local residues of the connection $\tilde{\nabla}$ at the points $z_1, \dots, z_4$.
\end{proposition}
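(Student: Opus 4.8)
The plan is to assemble the statement from the pointwise analysis carried out above at each branch point $z_j \in \CPone$, since the underlying holomorphic bundle is $(\tilde{\pi}_*\tilde{E})^\Gamma$ in both constructions and only the flags and weights at $z_1, \dots, z_4$ remain to be matched. First I would settle the well-definedness of the parabolic structure induced by $\tilde{\nabla}$. The residue formula \eqref{locresiduetildenabla} shows that at each $z_j$ the residue $\Res_{z_j}\tilde{\nabla}$ is diagonalizable with the two distinct eigenvalues
\[
\mu_1^j = \frac{d_j - 1}{2d_j}, \qquad \mu_2^j = \frac{d_j + 1}{2d_j},
\]
which, since $d_j \in \{k, l\}$ with $k, l \geq 3$, satisfy $0 < \mu_1^j < \mu_2^j < 1$. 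Hence the eigenline filtration \eqref{parfiltrlogconnection} together with the system of weights $\{\mu_1^j, \mu_2^j\}$ is a legitimate parabolic structure with the claimed weights, which is the first assertion.

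For the equivalence with Biswas's structure I would argue one branch point at a time, taking $z_1$ as the model case (the point $z_2$ being identical, and $z_3, z_4$ following after replacing $k$ by $l$). The three-case computation above shows that the Biswas filtration $t \mapsto (\tilde{\pi}_*\tilde{E})^\Gamma_t$ near $z_1$ jumps precisely at $t = \frac{k-1}{2k}$ and $t = \frac{k+1}{2k}$, which are exactly the eigenvalues $\mu_1^1, \mu_2^1$ of $\Res_{z_1}\tilde{\nabla}$, so the jump values agree with the prescribed weights. Moreover, the intermediate sub-bundle $(\tilde{\pi}_*\tilde{E})^\Gamma_{t_2}$ has fiber the line $[0 : \tilde{t}(z_1)]$ spanned by $\tilde{t} = w^{k+1}t$, and in the frame $(w^{k-1}s, w^{k+1}t)$ this is precisely the $\mu_2^1$-eigenline $L_1 = \Ker(\Res_{z_1}\tilde{\nabla} - \mu_2^1\Id)$ of \eqref{parfiltrlogconnection}. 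Thus both constructions produce the same weights and the same parabolic line at $z_1$, and identically at the remaining branch points.

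Finally I would patch these local identifications into a global isomorphism of parabolic bundles: having fixed the same underlying bundle $(\tilde{\pi}_*\tilde{E})^\Gamma$, the same flag $0 \subset L_j \subset ((\tilde{\pi}_*\tilde{E})^\Gamma)_{z_j}$ and the same weights at each $z_j$, one glues the local frames across the transition functions on $\CPone \smallsetminus \{z_1, \dots, z_4\}$ to obtain the equivalence. The main subtlety I expect lies not in the matching itself—which the case analysis essentially forces—but in the bookkeeping of $\Gamma$-invariance under push-forward: one must check that the locally chosen $\Gamma_{P_{2r+1}}$-invariant frames $(w^{d_j-1}s, w^{d_j+1}t)$ genuinely descend to holomorphic frames of $(\tilde{\pi}_*\tilde{E})^\Gamma$ near $z_j$ and glue consistently. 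This is, however, already built into the construction of the push-forward bundle in \eqref{BisFiltr} and into the derivation of the residue \eqref{locresiduetildenabla}, so it reduces to verifying compatibility of the two frame choices rather than any new geometric input.
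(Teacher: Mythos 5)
Your proposal is correct and follows essentially the same route as the paper: the paper's own justification is precisely the preceding three-case analysis of the Biswas filtration at each branch point, matching the jump values $t = \frac{d_j-1}{2d_j}, \frac{d_j+1}{2d_j}$ to the residue eigenvalues of $\tilde{\nabla}$ and the intermediate sub-bundle's fiber $[0:\tilde{t}(z_j)]$ to the eigenline $L_j$, followed by the same appeal to patching local frames via transition functions. Your additional remarks (checking $0<\mu_1^j<\mu_2^j<1$ explicitly, and flagging the descent of $\Gamma$-invariant frames) only make explicit what the paper leaves implicit.
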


From what we have showed so far, it is possible to define a $\lambda$-family of logarithmic connections $\tilde{\nabla}^\lambda$, for $\lambda \in \mathbb{C}^*$, on the $\Gamma$-invariant vector bundle $(\tilde{\pi}_*\tilde{E})^\Gamma$. Moreover, the family $\tilde{\nabla}^\lambda$ induces a $\lambda$-family of parabolic structures on the same vector bundle.

We denote with $(\tilde{\pi}_*\tilde{E})^{\Gamma, \lambda}$ the parabolic vector bundle given by $(\tilde{\pi}_*\tilde{E})^\Gamma$ together with the holomorphic structure and the parabolic structure induced by the logarithmic connection $\tilde{\nabla}^\lambda$. 

Proposition \ref{equivalenceparstructures} ensured that it is possible to extend the $\lambda$-family of parabolic bundles $(\tilde{\pi}_*\tilde{E})^{\Gamma, \lambda}$ at $\lambda=0$. In fact, the construction due to Biswas \cite{Bis:tesi} does not depend on $\lambda$.

The next result shows that, for $\lambda \neq 0$, there are only two possible holomorphic structures on $(\tilde{\pi}_*\tilde{E})^{\Gamma, \lambda}$.

\begin{proposition}
\label{possiblholomstructures}
The holomorphic vector bundle $(\tilde{\pi}_*\tilde{E})^{\Gamma, \lambda}\to \CPone$, for $\lambda\neq 0$, with holomorphic structure given by $\bar{\partial}^\lambda := (\tilde{\nabla}^\lambda)^{0,1}$, can be only one of the following two bundles
\begin{equation*}
(\tilde{\pi}_*\tilde{E})^{\Gamma, \lambda} = 
\begin{cases}
\mathcal{O}(-2) \oplus \mathcal{O}(-2)\\
\mathcal{O}(-1) \oplus \mathcal{O}(-3).
\end{cases}
\end{equation*}
\end{proposition}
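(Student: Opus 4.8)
The plan is to combine Grothendieck's splitting theorem with the residue theorem to fix the degree, and then to bound the instability of the splitting by a second--fundamental--form argument. Write $V := (\tilde{\pi}_*\tilde{E})^{\Gamma,\lambda}$ and, by Grothendieck's theorem, $V = \mathcal{O}(a)\oplus\mathcal{O}(b)$ with $a \geq b$. Since $\tilde{\nabla}^\lambda$ is a logarithmic connection on $V$ with polar divisor $D = z_1 + \dots + z_4$, the residue theorem (Fuchs relation) gives $\deg V = -\sum_{j=1}^4 \tr\Res_{z_j}\tilde{\nabla}^\lambda$. The eigenvalues of the local residues are the parabolic weights $\mu_1^j = \tfrac{d_j-1}{2d_j}$ and $\mu_2^j = \tfrac{d_j+1}{2d_j}$ coming from \eqref{locresiduetildenabla}; crucially these are $\lambda$--independent, because the Biswas construction of the weights does not depend on $\lambda$. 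Hence each trace equals $\mu_1^j + \mu_2^j = 1$, so $a + b = \deg V = -4$. It then remains to prove $a \leq -1$, which together with $a \geq b$ forces $(a,b) \in \{(-2,-2),(-1,-3)\}$, the two stated bundles.

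To bound $a$, I would argue by contradiction and suppose $a \geq 0$. Let $N = \mathcal{O}(a) \subset V$ be the maximal--degree line sub-bundle and consider its second fundamental form with respect to $\tilde{\nabla}^\lambda$, that is, the $\mathcal{O}_{\CPone}$--linear map
\begin{equation*}
\Theta \colon N \hookrightarrow V \xrightarrow{\ \tilde{\nabla}^\lambda\ } V\otimes K_{\CPone}(D) \twoheadrightarrow (V/N)\otimes K_{\CPone}(D).
\end{equation*}
This is a holomorphic section of $\Hom(N, V/N)\otimes K_{\CPone}(D) = \mathcal{O}(b-a)\otimes\mathcal{O}(-2)\otimes\mathcal{O}(4) = \mathcal{O}(b - a + 2)$. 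If $\Theta \neq 0$, then $b - a + 2 \geq 0$, i.e. $a - b \leq 2$; but $a \geq 0$ together with $a + b = -4$ gives $a - b \geq 4$, a contradiction.

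The remaining point, which I expect to be the main obstacle, is to exclude the degenerate case $\Theta = 0$ uniformly in $\lambda$; this is exactly where reducibility of $\tilde{\nabla}^\lambda$ could occur and must be handled by arithmetic rather than by a generic-irreducibility argument. Vanishing of $\Theta$ means that $N$ is $\tilde{\nabla}^\lambda$--invariant, so $\tilde{\nabla}^\lambda$ restricts to a logarithmic connection on $N$ whose residue at each $z_j$ is an eigenvalue $\nu_j \in \{\mu_1^j,\mu_2^j\}$ of $\Res_{z_j}\tilde{\nabla}^\lambda$. Applying the residue theorem to the line bundle $N$ yields $a = \deg N = -\sum_{j=1}^4 \nu_j$. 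Using $d_1 = d_2 = k$ and $d_3 = d_4 = l$, one computes $\sum_j \nu_j = 2 + \tfrac{\varepsilon_1 + \varepsilon_2}{2k} + \tfrac{\varepsilon_3 + \varepsilon_4}{2l}$ with $\varepsilon_i \in \{\pm 1\}$, and since $k, l \geq 3$ the correction term lies in the open interval $(-1,1)$. Hence the integer $\sum_j\nu_j$ must equal $2$, forcing $a = -2$ and contradicting $a \geq 0$. Therefore $\Theta \neq 0$, the previous paragraph applies, and $a \leq -1$ as desired.

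I would close by recording that the same arithmetic shows that whenever $\tilde{\nabla}^\lambda$ is reducible the underlying bundle is necessarily $\mathcal{O}(-2)\oplus\mathcal{O}(-2)$: an invariant line sub-bundle has degree $-2$, its quotient has degree $-2$, and $\mathrm{Ext}^1(\mathcal{O}(-2),\mathcal{O}(-2)) = H^1(\CPone,\mathcal{O}) = 0$ forces the extension to split. Thus the splitting type is one of the two claimed bundles for every $\lambda \neq 0$.
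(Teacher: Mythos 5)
Your proof is correct, and its skeleton --- Grothendieck splitting, degree $-4$ from the residue (Fuchs) relation with $\lambda$-independent eigenvalues, then exclusion of splittings $\mathcal{O}(a)\oplus\mathcal{O}(b)$ with $a \geq 0$ via the residue theorem applied to an invariant line bundle --- is the same as the paper's. The difference lies in how the invariant line bundle is obtained. The paper simply asserts that $\tilde{\nabla}^\lambda$ ``induces a logarithmic connection on the line sub-bundle $\mathcal{O}(m)$'' (the low-degree summand) and derives $m = -\sum_j \alpha_j > -4$, contradicting $m \leq -4$; it never justifies the existence of that induced connection, which is precisely the delicate point, since a direct summand of a Grothendieck splitting need not be preserved by the connection. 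Your second-fundamental-form dichotomy supplies exactly this missing step: either $\Theta \neq 0$, which is impossible on degree grounds because $\mathcal{O}(b-a+2)$ has no nonzero holomorphic sections when $a - b \geq 4$, or $\Theta = 0$ and the maximal sub-bundle $\mathcal{O}(a)$ is genuinely $\tilde{\nabla}^\lambda$-invariant, at which point the residue theorem applies to it (equivalently, one could then push the connection to the quotient $\mathcal{O}(b)$, which is presumably what the paper intends). Your arithmetic is also sharper: instead of the paper's open-interval bound $\alpha_j \in (0,1)$, you pin down $\sum_j \nu_j = 2$ exactly, so any $\tilde{\nabla}^\lambda$-invariant line sub-bundle has degree exactly $-2$; and your closing remark that reducibility of $\tilde{\nabla}^\lambda$ forces the splitting type $\mathcal{O}(-2)\oplus\mathcal{O}(-2)$ (via $\mathrm{Ext}^1(\mathcal{O}(-2),\mathcal{O}(-2)) = H^1(\CPone,\mathcal{O}) = 0$) is a correct addition not present in the paper.
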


\begin{proof}
Let $z_1, \dots, z_4$ be the branch points of the holomorphic map $\tilde{\pi}:\tilde{M} \to \CPone$ (cf. Proposition \ref{tildeMprop}) and $2d_j$, $j=1, \dots, 4$, the order of the stabilizer group of the points $p \in \tilde{\pi}^{-1}(z_j)$ under the action of $\Gamma \subset \SU(2) \times \SU(2)$ on $\tilde{M}$.

Equation \eqref{locresiduetildenabla} shows that the eigenvalues of the local residues of the connection $\tilde{\nabla}^\lambda$ at the points $z_1, \dots, z_4$ do not depends on $\lambda$ and are given by
\begin{equation*}
\mu^j_1 = \frac{d_j-1}{2d_j}, \quad \mu^j_2 = \frac{d_j+1}{2d_j},\quad j=1, \dots, 4.
\end{equation*}

The degree of the holomorphic vector bundle $(\tilde{\pi}_*\tilde{E})^{\Gamma, \lambda}\to \CPone$ can be computed using the formula (\cite[Proposition 1.2]{BisDa:tesi})
\begin{equation}
\label{degparbundle}
\degree((\tilde{\pi}_*\tilde{E})^{\Gamma, \lambda}) = - \sum_{j=1}^4(\mu^j_1 + \mu^j_2) = -4.
\end{equation}

Therefore, the Grothendieck Splitting Theorem \cite{Gro:tesi} implies 
\begin{equation*}
(\tilde{\pi}_*\tilde{E})^{\Gamma, \lambda} = \mathcal{O}(n) \oplus \mathcal{O}(m), \quad \lambda \in \mathbb{C}^*,
\end{equation*}
with $m+n = -4$.

Suppose that $n \in \mathbb{N}$ and $m \leq -4$ (or viceversa). The logarithmic connection $\tilde{\nabla}^\lambda$ induces a logarithmic connection on the line sub-bundle $\mathcal{O}(m)\to \CPone$.

Let $\alpha_1, \dots, \alpha_4$ be the local residues of such logarithmic connection on $\mathcal{O}(m)$, which must satisfies
\begin{equation*}
-4 \geq m = - \sum_{j=1}^4\alpha_j.
\end{equation*}

However, the values $\alpha_1, \dots, \alpha_4$ are induced by the local residues of the connection $\tilde{\nabla}^\lambda$ and must be contained in the interval $(0, 1)$ since the values $\mu^j_1, \mu^j_2$ are contained in the same interval.

Hence, we obtain
\begin{equation*}
- \sum_{j=1}^4 \alpha_j >-4
\end{equation*}
which gives a contradiction.

We conclude that the only two possibilities for the values of $m$ and $n$ are given by
\begin{equation*}
\begin{cases}
n=m=-2\\
n=-1, m=-3.
\end{cases}
\end{equation*}

\end{proof}

\section{The parabolic Higgs field and the proof of Theorem \ref{MainTheor}}
\label{SecMain}
Given the $\lambda$-family of logarithmic connections $\tilde{\nabla}^\lambda$ on $(\tilde{\pi}_*\tilde{E})^\Gamma \to \CPone$, we want now to study its residue at $\lambda=0$ and show that this gives a so-called parabolic Higgs field over $\CPone$.

We first recall that it is possible to associate to every parabolic bundle $E \to M$, defined via a logarithmic connection $\nabla$, a \textit{parabolic degree} defined as
\begin{equation*}
\pardeg(E) := \degree(E) + \sum_j \mu_1^j + \mu_2^j,
\end{equation*}
where $\mu_1^j, \mu_2^j$ are the eigenvalues of the local residues of $\nabla$ at the singular points $p_j \in M$.

Moreover, for every parabolic line sub-bundle $V$ of $E$, the parabolic degree of $V$ is given by
\begin{equation}
\label{parsubpardegree}
\pardeg(V) = \degree(V) + \sum_{j=1}^n\alpha_j,
\end{equation}
where
\begin{equation*}
\alpha_j = 
\begin{cases}
\mu^j_2 \quad \text{ if } V_{p_j} = L_j\\
\mu^j_1 \quad \text{ otherwise }
\end{cases}
\end{equation*}
and $L_j$ is the parabolic line of $E\to M$ at the point $p_j \in M$.

A parabolic bundle $E\to M$ is said \textit{parabolic semi-stable} (resp. \textit{parabolic stable}) if for every parabolic sub-bundle $V$ of $E$ with $0 < \rank(V) < \rank(E)$
\begin{equation*}
\frac{\pardeg(V)}{\rank(V)} \leq \frac{\pardeg(E)}{\rank(E)} \quad \bigg(resp. \quad \frac{\pardeg(V)}{\rank(V)} < \frac{\pardeg(E)}{\rank(E)}   \bigg).
\end{equation*}

In \cite[Lemma 4.1]{Tesi} it is shown that the $\Gamma$-invariant parabolic bundle $(\tilde{\pi}_*\tilde{E})^{\Gamma, \lambda} \to \CPone$, together with the logarithmic connection $\tilde{\nabla}^\lambda$ has parabolic degree equal to zero.

Now, let $M$ be the Lawson surface of genus $(k-1)(l-1)$, $\nabla^\lambda$ the associated family of flat $\SL(2, \mathbb{C})$-connections of the immersion $f: M \hookrightarrow \Sp^3$ and $\Phi \in H^0(M, \End_0(E)\otimes K_M)$ its Higgs field. 

If we consider the pullback of $\Phi$ under the map $\tau: \tilde{M} \to M$ as a map 
\begin{equation*}
\tau^*\Phi: H^0(U, \tilde{E}) \to H^0(U, \tilde{E} \otimes K_{\tilde{M}}), \quad U \subset \tilde{M},
\end{equation*}
it is possible to use analogous arguments as in Proposition \ref{equivlogconnection} to prove the following

\begin{proposition}
\label{parabolicHiggsfield}

Let $\Phi$ be the Higgs field of the immersion $f: M \hookrightarrow \Sp^3$. Then, the map $\tilde{\pi}_*(\tau^*\Phi)$ induces a Higgs field $\tilde{\Phi} \in H^0(\tilde{U}, \End_0((\tilde\pi_*\tilde{E})^\Gamma)\otimes K_{\CPone}\otimes L(D))$ on the $\Gamma$-invariant vector bundle $(\tilde{\pi}_*\tilde{E})^\Gamma \to \CPone$.
\end{proposition}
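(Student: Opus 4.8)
The plan is to mimic the construction of the logarithmic connection $\tilde{\nabla}$ in Proposition \ref{equivlogconnection}, replacing the connection $\tau^*\nabla$ with the Higgs field $\tau^*\Phi$ throughout. The key observation is that $\Phi$, unlike a connection, does \emph{not} satisfy the Leibniz rule; it is an $\mathcal{O}_M$-linear bundle map $\Phi \colon E \to E \otimes K_M$. This makes the pushforward argument cleaner than in Proposition \ref{equivlogconnection}, since we need only track the twist by $K_M$ rather than verify a derivation property, but it also means the target bundle is twisted by $K_{\CPone} \otimes L(D)$ rather than just $K_{\CPone}$, which is why the conclusion records $\tilde{\Phi}$ as a section of $\End_0((\tilde{\pi}_*\tilde{E})^\Gamma) \otimes K_{\CPone} \otimes L(D)$.

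First I would pull back the Higgs field along $\tau$ to obtain an $\mathcal{O}_{\tilde{M}}$-linear map $\tau^*\Phi \colon H^0(U, \tilde{E}) \to H^0(U, \tilde{E} \otimes K_{\tilde{M}})$ for $U \subset \tilde{M}$, using that pullback of a Higgs field is again a Higgs field. Next, I would apply the inclusion \eqref{inclusionmap} from \cite[Lemma 2.1]{BisHe:tesi}, namely $K_{\tilde{M}} \hookrightarrow \tilde{\pi}^*(K_{\CPone} \otimes L(D))$, to regard $\tau^*\Phi$ as taking values in $\tilde{E} \otimes \tilde{\pi}^*(K_{\CPone} \otimes L(D))$. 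Pushing forward by $\tilde{\pi}_*$ and invoking the projection formula \cite[Proposition 4.2]{Hit2:tesi} in the same form used in Proposition \ref{equivlogconnection} gives
\begin{equation*}
\tilde{\pi}_*(\tau^*\Phi) \colon H^0(\tilde{U}, \tilde{\pi}_*\tilde{E}) \to H^0(\tilde{U}, \tilde{\pi}_*\tilde{E} \otimes K_{\CPone} \otimes L(D)),
\end{equation*}
where $\tilde{U} = \tilde{\pi}(U) \subset \CPone$. Since $\tau^*\Phi$ is $\mathcal{O}$-linear, so is $\tilde{\pi}_*(\tau^*\Phi)$, and it therefore defines a genuine Higgs field on $\tilde{\pi}_*\tilde{E}$.

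The final step is the restriction to $\Gamma$-invariant sections, which proceeds exactly as the corresponding passage in Proposition \ref{equivlogconnection}: because $\Phi$ is $\Gamma$-equivariant (it is the $(1,0)$-part of $f^{-1}df$, and equivariance follows from the computation \eqref{Gammaactionforms} in Proposition \ref{equivariantfamilytildeE}, which shows $\gamma^*\omega = b\omega b^{-1}$ and hence $\gamma \cdot \Phi = \Phi \cdot b$), the map $\tilde{\pi}_*(\tau^*\Phi)$ carries $\Gamma$-invariant sections to $\Gamma$-invariant sections and thus descends to $(\tilde{\pi}_*\tilde{E})^\Gamma$. I would also note that $\Phi$ takes values in $\End_0(E)$, i.e.\ it is trace-free, and this property is preserved under pullback, pushforward, and restriction to the invariant part, which accounts for the $\End_0$ in the statement. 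The one point requiring genuine care, rather than a verbatim transcription of the earlier proof, is verifying that the target of the pushforward is correctly identified as $\End_0((\tilde{\pi}_*\tilde{E})^\Gamma) \otimes K_{\CPone} \otimes L(D)$ and that $\tilde{\Phi}$ has at worst the prescribed logarithmic poles at $z_1, \dots, z_4$; this is where the twist $L(D)$ enters and where the local picture near the branch points, described via the frame $(w^{d_j-1}s, w^{d_j+1}t)$ already computed above, must be checked to confirm that $\tilde{\Phi}$ extends as a meromorphic section with the expected parabolic behavior compatible with the parabolic structure of Proposition \ref{equivalenceparstructures}.
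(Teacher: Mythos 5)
Your proposal is correct and follows essentially the same route as the paper, whose entire proof consists of the remark that one can ``use analogous arguments as in Proposition \ref{equivlogconnection}'': pull back along $\tau$, apply the inclusion \eqref{inclusionmap}, push forward via $\tilde{\pi}_*$ with the projection formula, and descend to $(\tilde{\pi}_*\tilde{E})^\Gamma$ by $\Gamma$-equivariance. Your added observations---that $\mathcal{O}$-linearity of $\tau^*\Phi$ replaces the Leibniz-rule verification, and that the local behavior at $z_1,\dots,z_4$ in the frame $(w^{d_j-1}s, w^{d_j+1}t)$ confirms the logarithmic pole structure---are accurate refinements of what the paper leaves implicit (the latter check appears in the paper only later, in Proposition \ref{parhiggsfieldprop}).
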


The Higgs field $\tilde{\Phi}$ on $(\tilde{\pi}_*\tilde{E})^\Gamma \to \CPone$ satisfies the following properties:
\begin{proposition}
\label{parhiggsfieldprop}
The Higgs field $\tilde\Phi \in H^0(\CPone, \End_0((\tilde\pi_*\tilde{E})^\Gamma) \otimes K_{\CPone}\otimes L(D))$ defined in Proposition \ref{parabolicHiggsfield} satisfies the following:
\begin{itemize}
\item[$(i)$]
$\tilde\Phi$ is nilpotent, that is, $\tilde\Phi^2 = 0$;
\item[$(ii)$]
$\tilde\phi_j := \Res_{z_j}(\tilde\Phi)\neq 0$, for $j=1, \dots, 4$;
\item[$(iii)$]
$\tilde\Phi$ is a parabolic Higgs field, that is
\begin{equation*}
L_j \in \Ker(\tilde\phi_j), \quad j=1, \dots, 4,
\end{equation*}
where $L_j$ is the parabolic line of $(\tilde\pi_*\tilde{E})^\Gamma$ at $z_j$;
\item[$(iv)$]
for every holomorphic line sub-bundle $L \subset (\tilde\pi_*\tilde{E})^\Gamma$ with $\tilde\Phi(L) \subset L \otimes K$ the following holds
\begin{equation*}
\pardeg(L) < \pardeg((\tilde\pi_*\tilde{E})^\Gamma) = 0.
\end{equation*}
If this condition is satisfied we say that $((\tilde\pi_*\tilde{E})^\Gamma, \tilde\Phi)$ \textit{is a parabolic Higgs stable bundle}.
\end{itemize}
\end{proposition}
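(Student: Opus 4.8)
The plan is to establish $(i)$--$(iv)$ by transporting the known properties of the Higgs field $\Phi$ on $M$ (condition $(c)$: $\Phi$ is nilpotent and nowhere vanishing) through the pushforward construction of Proposition \ref{parabolicHiggsfield}, and then to read off $(iv)$ from a parabolic degree count.

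\emph{Property $(i)$.} Since $\Phi^2 = 0$, pullback gives $(\tau^*\Phi)^2 = \tau^*(\Phi^2) = 0$, and the passage to $\tilde\Phi$ via the $\Gamma$-invariant pushforward of Proposition \ref{parabolicHiggsfield} only rewrites the same endomorphism-valued $1$-form in a different local frame, hence preserves the algebra structure; therefore $\tilde\Phi^2 = 0$.

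\emph{Properties $(ii)$ and $(iii)$.} I would work in the local frame $(w^{d_j-1}s, w^{d_j+1}t)$ used for \eqref{locresiduetildenabla}, where $w^{2d_j} = z$ and $(s,t)$ is a holomorphic frame of $\tilde E$ near a point of $\tilde\pi^{-1}(z_j)$. Writing $\tau^*\Phi$ in the frame $(s,t)$, passing to $\Gamma_p$-invariants and bookkeeping the powers of $w$ produces $\tilde\phi_j := \Res_{z_j}\tilde\Phi$; nilpotency of $\tilde\Phi$ forces $\tilde\phi_j$ nilpotent, while the fact that $\Phi$ is nowhere vanishing prevents the residue from being zero, which is $(ii)$. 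For $(iii)$, recall that the eigenvalues $\mu_1^j < \mu_2^j$ of $\Res_{z_j}\tilde\nabla^\lambda$ are the $\lambda$-independent numbers in \eqref{locresiduetildenabla}; expanding $\Res_{z_j}\tilde\nabla^\lambda = \lambda^{-1}\tilde\phi_j + \Res_{z_j}\tilde\nabla + O(\lambda)$ and demanding that the spectrum remain constant in $\lambda$ forces $\tilde\phi_j$ to be strictly triangular in the eigenbasis of $\Res_{z_j}\tilde\nabla$, i.e. its kernel is a coordinate eigenline, and the equivariant weight computation identifies that line as the $\mu_2^j$-eigenline $L_j$. I expect this local equivariant computation --- pinning down not merely that $\tilde\phi_j$ is a nonzero nilpotent but that its kernel is precisely the parabolic line --- to be the main obstacle.

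\emph{Property $(iv)$.} Because $\tilde\Phi$ is nilpotent and nonzero, $\Ima\tilde\Phi\subseteq(\Ker\tilde\Phi)\otimes\mathcal K$ with $\mathcal K := K_{\CPone}\otimes L(D)$, and the only holomorphic line sub-bundle $L$ with $\tilde\Phi(L)\subset L\otimes\mathcal K$ is $K := \Ker\tilde\Phi$: a sub-bundle inside $\Ker\tilde\Phi$ is the saturated kernel itself, while any $L\not\subset\Ker\tilde\Phi$ would have $\tilde\Phi(L)$ contained in both $L\otimes\mathcal K$ and $(\Ker\tilde\Phi)\otimes\mathcal K$, which meet only in $0$ generically, forcing $\tilde\Phi|_L = 0$. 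So it suffices to show $\pardeg(K) < 0$. The induced map $\bar\Phi\colon V/K \to K\otimes\mathcal K$ (with $V := (\tilde\pi_*\tilde E)^\Gamma$) is injective, and since $\Phi$ is nowhere vanishing and the residues $\tilde\phi_j$ are nonzero by $(ii)$, it is an isomorphism; hence $\degree(V/K) = \degree K + \degree\mathcal K = \degree K + 2$. Combining with $\degree K + \degree(V/K) = \degree V = -4$ from \eqref{degparbundle} gives $\degree K = -3$. By $(iii)$ the kernel line bundle meets every parabolic line, $K_{z_j} = L_j$, so \eqref{parsubpardegree} yields $\pardeg(K) = \degree K + \sum_{j=1}^4 \mu_2^j = -3 + \left(2 + \tfrac1k + \tfrac1l\right) = -1 + \tfrac1k + \tfrac1l$, which is negative for $k,l\geq 3$. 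Together with the vanishing of the total parabolic degree (\cite[Lemma 4.1]{Tesi}), this proves that $((\tilde\pi_*\tilde E)^\Gamma, \tilde\Phi)$ is parabolic Higgs stable.
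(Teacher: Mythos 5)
Your parts $(i)$ and $(ii)$ are, in outline, exactly the paper's proof: the paper also notes $\tilde\Phi(s)=\tau^*\Phi(s)$ on invariant sections for $(i)$, and proves $(ii)$ by precisely the bookkeeping you describe --- writing $\tau^*\Phi=\begin{pmatrix}a&b\\c&-a\end{pmatrix}dw$ in the frame $(s,t)$ and passing to the invariant frame $(w^{d_j-1}s,w^{d_j+1}t)$, which turns it into $\begin{pmatrix}a&w^2b\\c/w^2&-a\end{pmatrix}dw$, so that the residue is $\begin{pmatrix}0&0\\c_0&0\end{pmatrix}$. The genuine gap is that you never execute this computation, and you explicitly defer it (``the main obstacle''). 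But the execution is where all the content lies: $\Gamma_p$-equivariance constrains the vanishing orders of $a$, $b$, $c$ at the fixed point; combined with nilpotency ($a^2+bc=0$) and the nowhere-vanishing of $\Phi$, it forces exactly the entry that picks up the factor $w^{-2}$ to have a simple zero, and this simultaneously yields $c_0\neq 0$ (part $(ii)$) and the fact that the residue is strictly triangular with kernel the eigenline $L_j$ of the larger eigenvalue $\mu_2^j$ (part $(iii)$). Your spectral-constancy argument for $(iii)$ --- expanding $\tr$ and $\det$ of $\lambda^{-1}\tilde\phi_j+\Res_{z_j}\tilde\nabla+O(\lambda)$ to deduce $\tr\bigl(\tilde\phi_j\,\Res_{z_j}\tilde\nabla\bigr)=0$, hence strict triangularity in the eigenbasis --- is valid and does not appear in the paper, but it is symmetric in the two eigenlines: it cannot decide whether $\Ker\tilde\phi_j$ is the $\mu_1^j$- or the $\mu_2^j$-eigenline, and that identification is the actual statement of $(iii)$. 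So as written, $(ii)$ and $(iii)$ remain unproven in your proposal.

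Part $(iv)$ is a genuinely different route, and it is correct once $(ii)$--$(iii)$ are in place. The paper's proof of $(iv)$ is one line: $\tilde E\to\tilde M$ is stable by Hitchin's self-duality paper, so parabolic Higgs stability of the pushforward follows from the orbifold theorem of Nasatyr--Steer. You instead argue directly on $\CPone$: the only $\tilde\Phi$-invariant saturated line subbundle is $K=\Ker\tilde\Phi$; the induced map $V/K\to K\otimes K_{\CPone}\otimes L(D)$ is an isomorphism because $\tilde\Phi$ has no zeros (nonzero residues at the $z_j$ by $(ii)$, and nowhere vanishing elsewhere since $\tau$ and $\tilde\pi$ are unbranched off the $z_j$); hence $\degree K=-3$, and $K_{z_j}=L_j$ from $(iii)$ gives $\pardeg K=-1+\tfrac1k+\tfrac1l<0$ for $k,l\geq3$. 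This is self-contained, avoids both external citations, and is even quantitative. It is a legitimate and arguably more elementary alternative to the paper's argument; just be aware that it consumes $(ii)$ and $(iii)$ as inputs, so it cannot be used to bypass the missing local computation.
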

\begin{proof}
\begin{itemize}
\item[$(i)$]
Let $\tilde{U} \subset \CPone$ be an open set and $s \in H^0(\tilde{U}, (\tilde{\pi}_*\tilde{E})^\Gamma)$. From the construction of $\tilde{\Phi}$, we have
\begin{equation*}
\tilde{\Phi}(s) = \tau^*\Phi(s),
\end{equation*}
where, in the right hand side, we consider $s$ as a local section of the holomorphic vector bundle $\tilde{E}$ on an open set $U \subset \tilde{M}$ such that $\tilde{\pi}(U) = \tilde{U}$. Since $\tau^*\Phi$ is nilpotent, it follows that $\tilde{\Phi}$ is nilpotent.
\item[$(ii)$]
Consider the local frame $(w^{d_j-1}s, w^{d_j+1}t)$ of $(\tilde{\pi}_*\tilde{E})^\Gamma$ around one branch point $z_j \in \CPone$ of the map $\tilde{\pi}: \tilde{M} \to \CPone$, where $(s, t)$ is a local frame for $\tilde{E}$ on an open set $U \subset \tilde{M}$.

It is possible to write the Higgs field $\tau^*\Phi$, locally on $U$, as $\tau^*\Phi = A(w)dw$, where
\begin{equation*}
A(w) = 
\begin{pmatrix}
a(w) & b(w)\\
c(w) & -a(w)
\end{pmatrix},
\end{equation*}
for some functions $a, b, c: U \to \mathbb{C}$. Therefore, 
\begin{equation*}
\begin{split}
\tau^*\Phi(s) & = (a(w)s + c(w)t)dw\\
\tau^*\Phi(t) & = (b(w)s -a(w)t)dw.
\end{split}
\end{equation*}

Writing $\tau^*\Phi$ with respect to the local frame $(w^{d_j-1}s, w^{d_j+1}t)$, we obtain
\begin{equation*}
\begin{split}
\tau^*\Phi(w^{d_j-1}s) & = (a(w)w^{d_j-1}s + c(w)w^{d_j-1}t)dw =(a(w)w^{d_j-1}s + \frac{c(w)}{w^2}w^{d_j+1}t)dw \\
\tau^*\Phi(w^{d_j+1}t) & = (b(w)w^{d_j+1}s -a(w)w^{d_j+1}t)dw =(w^2 b(w)w^{d_j-1}s -a(w)w^{d_j+1}t)dw.
\end{split}
\end{equation*}

Hence, the Higgs field $\tilde{\Phi}$, with respect to the local frame $(w^{d_j-1}s, w^{d_j+1}t)$ on the open set $\tilde{U} = \tilde{\pi}(U)$, is given by $\tilde{\Phi} = \tilde{A}(w)dw$, where
\begin{equation*}
\tilde{A}(w) = 
\begin{pmatrix}
a(w) & w^2b(w)\\
\frac{c(w)}{w^2} & -a(w)
\end{pmatrix}.
\end{equation*}

We can conclude that the function $c(w)$ must have a simple zero at $w=0$ and there are no conditions on the functions $a(w)$ and $b(w)$.
Therefore, the residue of $\tilde{\Phi}$ at the point $z_j$ is given by
\begin{equation*}
\begin{pmatrix}
0& 0\\
c_0 & 0
\end{pmatrix}, \quad c_0 \in \mathbb{C}^*,
\end{equation*}
which is non vanishing.
\item[$(iii)$]
Let $\mu^1_1, \mu^1_2$ be the eigenvalues of the local residues of the $\lambda$-family of logarithmic connections $\tilde{\nabla}^\lambda$ on $(\tilde{\pi}_*\tilde{E})^\Gamma \to \CPone$ (cf. \eqref{locresiduetildenabla}).

The parabolic line $L_1$ at the branch point $z_1 \in \CPone$ of the map $\tilde{\pi}:\tilde{M}\to \CPone$, is given by
\begin{equation*}
L_1 = \Ker(\Res_{z_1}\tilde{\nabla}^\lambda + \frac{k+1}{2k}\Id).
\end{equation*}
With respect to the local frame $(w^{k-1}s, w^{k+1}t)$ of $(\tilde{\pi}_*\tilde{E})^\Gamma$ around $z_1$, the parabolic line $L_1$ is given by
\begin{equation*}
L_1 = [0 : w^{k+1}t(z_1)].
\end{equation*}

From $(ii)$, the residue of $\tilde{\Phi}$ at $z_1$ is given, with respect to the same local frame, by
\begin{equation*}
\tilde\phi_1 = 
\begin{pmatrix}
0 & 0\\
c_0 & 0
\end{pmatrix}, \quad c_0 \in \mathbb{C}^*.
\end{equation*}

Therefore, we obtain
\begin{equation*}
\begin{pmatrix}
0 & 0\\
c_0 & 0
\end{pmatrix}
\begin{pmatrix}
0\\
w^{k+1}t(z_1)
\end{pmatrix} = 
\begin{pmatrix}
0\\
0
\end{pmatrix},
\end{equation*}
which implies $L_1 \in \Ker(\tilde\phi_1)$. Analogous computations can be made for the parabolic lines at the other branch points $z_2, z_3, z_4 \in \CPone$.
\item[$(iv)$]
Since the holomorphic vector bundle $\tilde{E} \to \tilde{M}$ is stable (\cite{Hit3:tesi}), this follows from \cite[Theor. 3.1]{NaSt:tesi}. 
\end{itemize}
\end{proof}

Having a parabolic Higgs field $\tilde{\Phi}$ on $\CPone$, allows us to determine which holomorphic structures are admissible for $(\tilde{\pi}_*\tilde{E})^{\Gamma, \lambda}$ at $\lambda=0$.
\begin{proposition}
\label{holstructlambda0}
The holomorphic structure of $(\tilde{\pi}_*\tilde{E})^{\Gamma, \lambda}$, induced by the $\lambda$-family of logarithmic connections $\tilde{\nabla}^\lambda$, at $\lambda=0$ can be either $\mathcal{O}(-2) \oplus \mathcal{O}(-2)$ or $\mathcal{O}(-1) \oplus \mathcal{O}(-3)$.
\end{proposition}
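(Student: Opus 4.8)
The plan is to combine the Grothendieck splitting with the parabolic Higgs stability recorded in Proposition \ref{parhiggsfieldprop}$(iv)$, mirroring the strategy of Proposition \ref{possiblholomstructures} but replacing the logarithmic–connection estimate (which degenerates like $\lambda^{-1}$ as $\lambda\to 0$) by an argument using the Higgs field $\tilde\Phi$. First I would note that the right-hand side of \eqref{degparbundle} is built from the residue eigenvalues \eqref{locresiduetildenabla}, which are independent of $\lambda$; hence the degree equals $-4$ also at $\lambda=0$. Writing $\bar\partial^0=(\tilde\nabla^0)^{0,1}$ for the holomorphic structure at $\lambda=0$, the Grothendieck Splitting Theorem \cite{Gro:tesi} gives
\begin{equation*}
(\tilde\pi_*\tilde{E})^{\Gamma, 0} = \mathcal{O}(n) \oplus \mathcal{O}(m), \quad n + m = -4,
\end{equation*}
and after reordering we may assume $n \geq m$, so that $2n \geq n+m = -4$, i.e. $n \geq -2$. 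To land in one of the two stated cases it then suffices to exclude $n \geq 0$.

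So suppose $n \geq 0$ and let $L = \mathcal{O}(n) \subset (\tilde\pi_*\tilde{E})^{\Gamma, 0}$ be the corresponding line sub-bundle. The crucial step is to show that $L$ is $\tilde\Phi$-invariant. Since $\tilde\Phi \in H^0(\CPone, \End_0((\tilde\pi_*\tilde{E})^\Gamma) \otimes K_{\CPone} \otimes L(D))$ with $D = z_1 + \dots + z_4$, and since $K_{\CPone} \otimes L(D) = \mathcal{O}(-2) \otimes \mathcal{O}(4) = \mathcal{O}(2)$, the off-diagonal (``second fundamental form'') component
\begin{equation*}
L \hookrightarrow (\tilde\pi_*\tilde{E})^{\Gamma, 0} \xrightarrow{\ \tilde\Phi\ } (\tilde\pi_*\tilde{E})^{\Gamma, 0} \otimes \mathcal{O}(2) \twoheadrightarrow \big((\tilde\pi_*\tilde{E})^{\Gamma, 0}/L\big)\otimes \mathcal{O}(2)
\end{equation*}
is a global section of $\Hom(L, \mathcal{O}(m)) \otimes \mathcal{O}(2) = \mathcal{O}(-2n - 2)$. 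As $-2n - 2 \leq -2 < 0$ for $n \geq 0$, this line bundle has no nonzero global sections, the component vanishes, and therefore $\tilde\Phi(L) \subset L \otimes K_{\CPone} \otimes L(D)$; that is, $L$ is preserved by $\tilde\Phi$.

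The final step is the stability contradiction. By Proposition \ref{parhiggsfieldprop}$(iv)$ the pair $((\tilde\pi_*\tilde{E})^\Gamma, \tilde\Phi)$ is parabolic Higgs stable, so the $\tilde\Phi$-invariant sub-bundle $L$ must satisfy $\pardeg(L) < 0$. On the other hand, using \eqref{parsubpardegree} together with the residue eigenvalues \eqref{locresiduetildenabla}, the weight attached at each branch point is at least $\mu_1^j = \tfrac{d_j - 1}{2 d_j}$, so
\begin{equation*}
\pardeg(L) = \degree(L) + \sum_{j=1}^4 \alpha_j \geq n + \Big(2 - \tfrac{1}{k} - \tfrac{1}{l}\Big) \geq 0 + \tfrac{4}{3} > 0
\end{equation*}
since $k, l \geq 3$. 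This contradicts stability, so $n \geq 0$ is impossible; hence $n \in \{-1, -2\}$ and, with $m = -4 - n$, the bundle is either $\mathcal{O}(-2) \oplus \mathcal{O}(-2)$ or $\mathcal{O}(-1) \oplus \mathcal{O}(-3)$. I expect the $\tilde\Phi$-invariance step to be the main obstacle: one must correctly identify the twist $K_{\CPone} \otimes L(D) = \mathcal{O}(2)$ and verify the negativity of the relevant $\Hom$-bundle, after which parabolic Higgs stability closes the argument exactly as in Proposition \ref{possiblholomstructures}.
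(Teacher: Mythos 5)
Your proof is correct, and its first half is essentially the paper's: both take the Grothendieck splitting $\mathcal{O}(n)\oplus\mathcal{O}(m)$ with $n+m=-4$, assume $n\geq 0$ for contradiction, and show that $\mathcal{O}(n)$ is $\tilde\Phi$-invariant. The paper phrases this step in a local frame (the lower-left entry $c$ of $\tilde\Phi$ is forced to vanish), which is precisely your global observation that the second fundamental form lives in $\Hom(\mathcal{O}(n),\mathcal{O}(m))\otimes K_{\CPone}\otimes L(D)=\mathcal{O}(-2n-2)$, a line bundle with no nonzero sections when $n\geq 0$. Where you genuinely diverge is the closing argument. The paper uses nilpotency (Proposition \ref{parhiggsfieldprop}$(i)$) to kill the diagonal entry, leaving a strictly upper-triangular $\tilde\Phi$, and then contradicts the nowhere-vanishing of $\tilde\Phi$, a property inherited from the Higgs field $\Phi$ on $M$. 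You instead invoke parabolic Higgs stability (Proposition \ref{parhiggsfieldprop}$(iv)$): the $\tilde\Phi$-invariant sub-bundle $\mathcal{O}(n)$ must have negative parabolic degree, whereas positivity of the weights \eqref{locresiduetildenabla} gives $\pardeg(\mathcal{O}(n))=n+\sum_j\alpha_j>n\geq 0$ via \eqref{parsubpardegree} (your explicit bound $4/3$ is more than is actually needed). Your variant buys robustness: it uses neither nilpotency nor the nowhere-vanishing claim, the latter being the most delicate input of the paper's proof (pullbacks of $1$-forms under the branched cover $\tau$ degenerate at ramification points, so ``nowhere vanishing'' requires care there), and it runs entirely on the stability property the paper has already recorded. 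The paper's variant, conversely, stays at the level of local frames and zero-counting on $\CPone$ and never touches parabolic degrees. One small point: your assertion that the degree is still $-4$ at $\lambda=0$ tacitly extends \eqref{degparbundle} to a value of $\lambda$ where the logarithmic connection does not exist; the clean justification is that the degree is a topological invariant of the fixed underlying smooth bundle and hence constant in $\lambda$ --- but the paper glosses this point in exactly the same way, so nothing is lost.
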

\begin{proof}
Analogously to the proof of Proposition \ref{possiblholomstructures}, we have that $(\tilde{\pi}_*\tilde{E})^{\Gamma, \lambda}$ at $\lambda=0$ is given by $\mathcal{O}(n) \oplus \mathcal{O}(m)$ with $n+m = -4$. Suppose that $n \geq 0$ and $m \leq -4$.

Let $(s, t)$ be a frame for $\mathcal{O}(n) \oplus \mathcal{O}(m)$, where $s$ has divisor $n \cdot 0$ and $t$ divisor $m \cdot 0$, for $0 \in \CPone$. If we write the parabolic Higgs field locally as
\begin{equation*}
\tilde{\Phi}(w) = 
\begin{pmatrix}
a(w)  & b(w)\\
c(w) & -a(w)
\end{pmatrix}dw,
\end{equation*}
for some complex-valued functions $a, b, c$, we have
\begin{equation}
\label{equationPhilocframe}
\tilde{\Phi}(s) = (as+ct)dw.
\end{equation}

Equation \eqref{equationPhilocframe} and the assumption on $m$ and $n$ imply that the function $c$ must be zero and $\tilde{\Phi}$ is upper triangular. From $(i)$ of Proposition \ref{parhiggsfieldprop}, $\tilde{\Phi}$ is nilpotent. Therefore, the function $a$ must be also zero and $\tilde{\Phi}$ is given locally by
\begin{equation}
\label{formhiggsfieldlambda0}
\tilde{\Phi}(w) = 
\begin{pmatrix}
0  & b(w)\\
0 & 0
\end{pmatrix}dw,
\end{equation}

From the fact that the Higgs field $\tau^*\Phi$ on the holomorphic vector bundle $\tilde{E} \to \tilde{M}$ is nowhere vanishing (because the Higgs field $\Phi$ on $M$ is nowhere vanishing), it follows that $\tilde{\Phi}$ must be nowhere vanishing. But the Higgs field $\tilde{\Phi}$ of the form \eqref{formhiggsfieldlambda0} admits points where it vanishes and we obtain a contradiction. 

We conclude that the only possibilities for the values of $m$ and $n$ are
\begin{equation*}
\begin{cases}
n=m=-2\\
n=-1, m=-3
\end{cases}.
\end{equation*}
\end{proof}

Long but straightforward computations (\cite[Subsection 4.5.2, Subsection 4.5.3, Appendix A1, Appendix A2]{Tesi}) show that the only holomorphic structure on $(\tilde{\pi}_*\tilde{E})^{\Gamma, \lambda}$ at $\lambda=0$ which admit a nilpotent parabolic Higgs field $\tilde{\Phi}$ with non vanishing residues such that $((\tilde{\pi}_*\tilde{E})^{\Gamma}, \tilde{\Phi})$ is parabolic Higgs stable is $\mathcal{O}(-2) \oplus \mathcal{O}(-2)$ with the parabolic lines $L_1, \dots, L_4$ at the branch points $z_1, \dots, z_4 \in \CPone$ normalized to correspond to four different points of $\CPone$.

We are now ready to prove that the $\lambda$-family of logarithmic connections $\tilde{\nabla}^\lambda$ defined on the $\Gamma$-invariant parabolic bundle $(\tilde{\pi}_*\tilde{E})^{\Gamma, \lambda} \to \CPone$, gives a DPW potential on the four punctured Riemann sphere, from which it is possible to reconstruct the immersion $f: M \hookrightarrow \Sp^3$. Namely, we have the following
%

\begin{proof}[Proof of Theorem \ref{MainTheor}]
We want to prove that the holomorphic family of logarithmic connections $\tilde{\nabla}^\lambda$, defined in Section \ref{SecParbund} satisfies conditions $(i)-(iv)$.
\begin{itemize}
\item[$(i)$]
Let $\tau: \tilde{M} \to M$ be the holomorphic map between Riemann surfaces defined in Proposition \ref{tildeMprop}, branched at the fixed points of the action of $G \subset \SO(4)$ on $M$.

From the Riemann's existence Theorem, $\tau$ is uniquely determined by a monodromy representation
\begin{equation*}
\rho: \pi_1(M \smallsetminus \{P_1, \dots, P_{2l+2}, Q_1, \dots, Q_{2k+2}\}, P_0) \to \Z_2,
\end{equation*}
where $P_1, \dots, P_{2l+2}, Q_1, \dots, Q_{2k+2}$ are the fixed points of the action $G \times M \to M$ and $P_0 \in M \smallsetminus \{P_1, \dots, P_{2l+2}, Q_1, \dots, Q_{2k+2}\}$.

The representation $\rho$ determines a flat connections $\hat{\nabla}$ on $M$ via the Riemann-Hilbert correspondence (\cite[Theorem 3.5]{Heu:tesi}). Moreover, the construction of $\tau$ and $\tilde{M}$ implies that the pullback connection $\tau^*\hat{\nabla}$ on $\tilde{M}$ is trivial.

From Proposition \ref{gaugeequivconnections}, it follows that the $\lambda$-family of flat connections $\tau^*\nabla^\lambda$ and $\tilde{\pi}^*\tilde{\nabla}^\lambda$ are gauge equivalent. Therefore, if we consider the associated family of flat connections $\nabla^\lambda$ on $M$ and the pullback under the map $\pi: M \to \CPone$ of the $\lambda$-family of logarithmic connections $\tilde{\nabla}^\lambda$ on $\CPone$, an argument similar to the one used in the discussion prior to Proposition \ref{gaugeequivconnections}, shows that there only two possible cases:
\begin{itemize}
\item[$(1)$]
$\nabla^\lambda$ is gauge equivalent to $\tilde{\pi}^*\tilde{\nabla}^\lambda$ under a holomorphic family of meromorphic gauge transformations $g(\lambda)$, which extends to $\lambda=0$;
\item[$(2)$]
$\nabla^\lambda$ is gauge equivalent to $\tilde{\pi}^*\tilde{\nabla}^\lambda \otimes \hat{\nabla}$ under a holomorphic family of meromorphic gauge transformations $g(\lambda)$, which extends to $\lambda=0$.
\end{itemize}

Case $(1)$ cannot occur due to the choice for the eigenvalues of the local residues of the family of logarithmic connections $\tilde{\nabla}^\lambda$ at the points $z_1, \dots, z_4 \in \CPone$ we have made.
Therefore, it remains only the situation given in case $(2)$.

\item[$(ii)$]
Proposition \ref{holstructlambda0} shows that the underlying holomorphic vector bundle of the parabolic bundle $(\tilde{\pi}_*\tilde{E})^{\Gamma, \lambda} \to \CPone$ at $\lambda=0$ is given by $\mathcal{O}(-2)\oplus \mathcal{O}(-2)$.

Since the generic holomorphic rank 2 vector bundle of degree $-4$ on $\CPone$ is $\mathcal{O}(-2)\oplus \mathcal{O}(-2)$, it is possible to find an open neighbourhood $U \subset \mathbb{C}^*$ of $\lambda=0$ such that 
\begin{equation*}
(\tilde{\pi}_*\tilde{E})^{\Gamma, \lambda} = \mathcal{O}(-2)\oplus \mathcal{O}(-2), \quad \text{ for every } \lambda \in U.
\end{equation*}

The expression \eqref{DPWpotentialexpression} for the $\lambda$-family of logarithmic connections $\tilde{\nabla}^\lambda$, for $\lambda \in U$, can be obtained by writing the connection 1-form of $\tilde{\nabla}^\lambda$ with respect to the frame
\begin{equation*}
\bigg( \frac{1}{(z-z_1)(z-z_3)}e_1, \frac{1}{(z-z_1)(z-z_3)}e_2\bigg),
\end{equation*}
where $(e_1, e_2)$ is the meromorphic frame for $\mathcal{O}(-2)\oplus \mathcal{O}(-2)$ such that $e_1, e_2$ have simple poles at the branch points $z_2, z_4 \in \CPone$.

\item[$(iii)$]
Since for a generic $\lambda \in \mathbb{C}^*$ the underlying holomorphic vector bundle of the parabolic bundle $(\tilde{\pi}_*\tilde{E})^{\Gamma, \lambda} \to \CPone$ is $\mathcal{O}(-2) \oplus \mathcal{O}(-2)$, the values of $\lambda \in \mathbb{C}$ such that this not happens form a discrete subset $\tilde{U} \subset \mathbb{C}^*$.

By parametrizing the $\lambda$-family of logarithmic connections $\tilde{\nabla}^\lambda$ of the form \eqref{DPWpotentialexpression} similarly to \cite[Section 2.3]{HeHe:tesi}, it follows that the entries of the $\lambda$-family of 1-forms $\eta(z, \lambda)$ cannot have essential singularities.
Therefore, it is possible to extends holomorphically to $\mathbb{C}^*$ the map
\begin{equation*}
\lambda \mapsto \eta(z, \lambda).
\end{equation*}

The unitarizability of $\tilde{\nabla}^\lambda = d + \eta(z, \lambda)$ for $\lambda \in \Sp^1$ comes from the construction of $\tilde{\nabla}^\lambda$ and from the fact that the connection $\nabla^\lambda$ is unitary for $\lambda \in \Sp^1$.

\item[$(iv)$]
This follows from the description of the local residues of $\tilde{\nabla}^\lambda$ in Section \ref{SecParbund} (cf. \eqref{locresiduetildenabla}).
\end{itemize}

The CMC immersion $f: M \hookrightarrow \Sp^3$ can be constructed via the DPW method using the family of 1-forms $\eta(z, \lambda)$ in \eqref{DPWpotentialexpression}. There are two cases to consider:
\begin{itemize}
\item[$(a)$]
for all $\lambda \in D_1 = \{\lambda \in \mathbb{C}^* \mid |\lambda| \leq 1\}$ the underlying holomorphic vector bundle of $(\tilde{\pi}_*\tilde{E})^{\Gamma, \lambda} \to \CPone$ is $\mathcal{O}(-2)\oplus \mathcal{O}(-2)$.

In order to obtain a closed immersion along non trivial loops in $M$, the monodromy matrices of $\eta(z, \lambda)$ must be simultaneously unitarizable (see for example \cite[Section 7]{SKKR:tesi}).
It is possible to find a unitarizer for the monodromy matrices of $\eta(z, \lambda)$ for all $\lambda \in D_1$ (\cite[Theorem 4]{SKKR:tesi}).

Therefore, the immersion $f: M \hookrightarrow \Sp^3$ constructed using the steps of the DPW method is well-defined.

\item[$(b)$]
There exists a discrete subset $\tilde{U} \subset D_1$ such that the underlying holomorphic vector bundle of $(\tilde{\pi}_*\tilde{E})^{\Gamma, \lambda} \to \CPone$ is $\mathcal{O}(-1)\oplus \mathcal{O}(-3)$.

Similarly to case $(a)$, \cite[Theorem 4]{SKKR:tesi} gives a unitarizer for the monodromy matrices of $\eta(z, \lambda)$, which is singular at the values of $\lambda \in \tilde{U}$.

In order to obtain a well-defined immersion $f: M \hookrightarrow \Sp^3$, in this case it is necessary to apply the Iwasawa factorization (step $(ii)$ of the DPW method) on a disc $D_r$ of radius $r <1$ such that (\cite[Section 3.1]{SKKR:tesi})
\begin{equation}
\label{riwasawacondition}
D_r \cap \tilde{U}  = \emptyset.
\end{equation}
Since $\tilde{U}$ is a discrete subset, there exists a $r <1$ such that \eqref{riwasawacondition} holds. Therefore, the immersion $f: M \hookrightarrow \Sp^3$ constructed via the DPW method is well defined.
\end{itemize}

\end{proof}

\section{Symmetric CMC surfaces}
\label{SecSymsurf}

It is possible to prove a result analogous to Theorem \ref{MainTheor} for compact CMC surfaces into $\Sp^3$ which satisfy the following conditions.

\begin{definition}
\label{symCMCsurf}
Let $M$ be a compact Riemann surface. We say that $M$ is a \textit{symmetric CMC surface} if there exists a CMC embedding $f: M \hookrightarrow \Sp^3$ and the following conditions are satisfied:
\begin{itemize}
\item[$(i)$]
There exists a finite subgroup $G \subset \SO(4)$ with a presentation of the form
\begin{equation*}
G = \langle g_1, \dots, g_4 \mid g_1\cdots g_4 = 1\rangle,
\end{equation*}
which acts faithfully on $f(M) \simeq M$, where 1 denotes the identity element of $G$;
\item[$(ii)$]
The quotient $M/G$ is the Riemann sphere $\CPone$;
\item[$(iii)$]
The projection to the quotient $\pi: M \to \CPone$ is a holomorphic map between Riemann surfaces of degree $|G|$, branched at four points $z_1, \dots, z_4 \in \CPone$\footnote{The group $G$ acts transitively on the set $\pi^{-1}(z)$ for each $z \in \CPone$, that is, for every two points $p, \tilde{p} \in \pi^{-1}(z)$ there exists an element $g \in G$ such that $g\cdot p = \tilde{p}$.
A covering map which satisfies this property is called a \textit{Galois covering} and $G$ the \textit{Galois group} of the covering map (we refer to \cite[Chapter 2]{Sza:tesi} for more details on the theory of Galois coverings)}.
\end{itemize}
\end{definition}

There is no complete classification of symmetric CMC surfaces, however the surfaces in Table \eqref{tabsymsurf} satisfy Definition \ref{symCMCsurf}.

\begin{center}
\begin{equation}
\label{tabsymsurf}
\begin{tabular}{| l | c | c | }
\hline
Surface & Genus & $\SO(4)$-symmetry group\\
\hline
\hline
Lawson's surface $\xi_{(g-1, 1)}$ & $g-1$ & $\Z_{g}\times \Z_2$\\
\hline
Lawson's surface $\xi_{(k-1, l-1)}$ & $(k-1)(l-1)$ & $\Z_k \times \Z_l$\\
\hline
KPS Tetrahedral & 3 & $A_4$\\
\hline
KPS Octahedral & 7 & $S_4$\\
\hline
KPS Cubical & 5 & $S_4$\\
\hline
KPS Icosahedral & 19 & $A_5$\\
\hline
KPS Dodecahedral & 11 & $A_5$\\
\hline
Octahedral join & 11 & $S_4$\\
\hline
Icosahedral join & 29 & $A_5$\\
\hline
\end{tabular}
\end{equation}
\end{center}

Let $M$ be a surface from Table \eqref{tabsymsurf} with symmetry group $G \subset \SO(4)$. Around each point $p \in M$ with non trivial stabilizer group with respect to the action of $G$, the local action around $p$ is given by an action of a cyclic group $\Z_k$, for some $k$, as in the case of the Lawson surface $\xi_{k-1, l-1}$ we considered in the rest of the article. Therefore, it is possible to do the same local computations we have done in Section \ref{SecLift} and prove the following result which extends the existence of a DPW potential for the symmetric CMC surfaces in Table \eqref{tabsymsurf}

\begin{theorem}
Let $M$ be one of the surface in Table \eqref{tabsymsurf} with symmetry group $G \subset \SO(4)$. Let $\nabla^\lambda$ be the associated family of flat $\SL(2, \mathbb{C})$-connections of the immersion $f: M \hookrightarrow \Sp^3$. Then, there exists a holomorphic family of logarithmic connections
\begin{equation*}
\tilde\nabla^\lambda = \lambda^{-1}\tilde\Phi + \tilde\nabla + \text{ higher order terms in $\lambda$ }
\end{equation*}
on the four punctured sphere $\CPone$, singular at the four branch points $z_1, \dots, z_4$ of $\pi : M \to M/G = \CPone$, where $\tilde\Phi$ is a nilpotent $\mathfrak{sl}(2, \mathbb{C})$-valued complex linear 1-form, which satisfies conditions $(i)-(iv)$ of Theorem \ref{MainTheor}. 
Therefore, it is possible to reconstruct the immersion $f: M \hookrightarrow \Sp^3$ from a meromorphic DPW potential on the four punctured sphere.
\end{theorem}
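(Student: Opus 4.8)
The plan is to rerun the proof of Theorem \ref{MainTheor} essentially verbatim, checking at each step that the only inputs it uses are the structural hypotheses of Definition \ref{symCMCsurf} together with the elementary fact that the stabilizer of a point under a holomorphic action of a finite group on a Riemann surface is cyclic.

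\emph{Lifting the action.} First I would lift $G$ to $\Gamma := \varepsilon^{-1}(G) \subset \SU(2)\times\SU(2)$ through the spin double cover $\varepsilon$. Since $\ker\varepsilon = \{\pm\Id\}\cong\Z_2$ is central, $|\Gamma| = 2|G|$. The four-generator presentation $g_1\cdots g_4 = 1$ of hypothesis $(i)$ lifts to $\Gamma$: any choice $\gamma_j\in\varepsilon^{-1}(g_j)$ gives $\gamma_1\cdots\gamma_4\in\ker\varepsilon=\{\pm\Id\}$, and replacing a single $\gamma_j$ by $-\gamma_j$ flips the sign, so one may arrange $\gamma_1\cdots\gamma_4=\Id$. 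This defines the monodromy exactly as in \eqref{tildemonodromy}, and Riemann's existence theorem yields an auxiliary surface $\tilde M$ together with the analogue of Proposition \ref{tildeMprop}: $\Gamma$ acts faithfully on $\tilde M$, the quotient is $\CPone$, and $\tau:\tilde M\to M$ is the degree-$2$ cover associated with the normal central subgroup $\ker\varepsilon$, whence a $\Z_2$-monodromy representation.

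\emph{Equivariance, push-forward and residues.} The $\Gamma$-equivariance of $\nabla^\lambda$ and $\tau^*\nabla^\lambda$ (Proposition \ref{equivariantfamilytildeE}) holds unchanged, its proof using only that isometries of $\Sp^3$ have the form $x\mapsto axb^{-1}$ with $a,b\in\SU(2)$ and the normalization $f(p)=\Id$ at fixed points. Biswas' push-forward then produces the rank-$2$ parabolic bundle $(\tilde\pi_*\tilde E)^\Gamma\to\CPone$, and Propositions \ref{equivlogconnection}, \ref{gaugeequivconnections}, \ref{equivalenceparstructures} apply word for word. The local residues are the crucial point: at each $z_j$ a point $p\in\tilde\pi^{-1}(z_j)$ has cyclic stabilizer, say $\Z_{2d_j}$, so the local model $w^{2d_j}=z$ and the invariant-frame computation of Section \ref{SecLift} give the diagonal residue \eqref{locresiduetildenabla} with eigenvalues in $(0,1)$. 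Because the fiber action has determinant $1$, the two weight exponents are opposite modulo $2d_j$, hence $\mu_1^j+\mu_2^j=1$ for every $j$; then \eqref{degparbundle} yields $\degree=-4$ and Proposition \ref{possiblholomstructures} forces $(\tilde\pi_*\tilde E)^{\Gamma,\lambda}$ to be $\mathcal O(-2)\oplus\mathcal O(-2)$ or $\mathcal O(-1)\oplus\mathcal O(-3)$. The parabolic Higgs field $\tilde\Phi$ and Propositions \ref{parabolicHiggsfield}, \ref{parhiggsfieldprop}, \ref{holstructlambda0} (the stability input being that $\tilde E\to\tilde M$ is stable) transfer with no change.

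\emph{Main obstacle and conclusion.} The only genuinely group-dependent step is the final normalization showing that, at $\lambda=0$, only $\mathcal O(-2)\oplus\mathcal O(-2)$ carries a nilpotent parabolic Higgs field with nonvanishing residues making $((\tilde\pi_*\tilde E)^\Gamma,\tilde\Phi)$ parabolic Higgs stable with four distinct parabolic lines; this replaces the case-by-case computation of the thesis and must now be carried out for the finite subgroups of $\SU(2)\times\SU(2)$ covering $A_4$, $S_4$ and $A_5$. I expect this to be the main difficulty, but since the relevant data are invariably four branch points, residue eigenvalues in $(0,1)$ summing to $1$, and parabolic Higgs stability inherited from the stability of $\tilde E$, the same argument as for $\xi_{k-1,l-1}$ applies. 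Granting it, conditions $(i)$--$(iv)$ of Theorem \ref{MainTheor} follow exactly as in its proof --- $(i)$ from the flat connection $\hat\nabla$ with $\Z_2$-monodromy attached to $\tau$, $(ii)$ and $(iii)$ from the $\mathcal O(-2)\oplus\mathcal O(-2)$ splitting near $\lambda=0$ together with the unitarity of $\nabla^\lambda$ on $\Sp^1$, and $(iv)$ from the residue formula \eqref{locresiduetildenabla} --- and the DPW method reconstructs $f:M\hookrightarrow\Sp^3$ from the resulting meromorphic potential.
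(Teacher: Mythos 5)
Your proposal is correct and takes essentially the same route as the paper, whose entire proof is the observation that point stabilizers of a finite holomorphic action are cyclic, so the local computations of Section \ref{SecLift} and the machinery of Sections \ref{SecParbund}--\ref{SecMain} apply verbatim to any surface satisfying Definition \ref{symCMCsurf}. If anything you are more careful than the paper: the $\lambda=0$ splitting-type classification you flag as the main group-dependent obstacle (cited in the Lawson case to the author's thesis) is not even mentioned in the paper's two-line argument.
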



\begin{thebibliography}{30}
\bibitem{Al:tesi} Alexandrov, A. (1956). \emph{Uniqueness theorem for surfaces in the large}. Vestnik Leningrad Univ., 11, 5-17.

\bibitem{Bis:tesi} Biswas, I. (1997). \emph{Parabolic bundles as orbifold bundles}. Duke Mathematical Journal, 88(2), 305-326.

\bibitem{BisDa:tesi} Biswas, I., Dan, A., Paul, A. (2018). \emph{Criterion for logarithmic connections with prescribed residues}. Manuscripta Mathematica, 155(1-2), 77-88.

\bibitem{BisHe:tesi} Biswas, I. and Heu, V. (2013). \emph{On the logarithmic connections over curves}. J. Ramanujan Math. Soc., 28A, 21--40.

\bibitem{Bo:tesi} Bobenko, A. I. (1991). \emph{All constant mean curvature tori in $\mathbb{R}^3, \mathbb{S}^3, \mathbb{H}^3$
              in terms of theta-functions}. Math. Ann, 290, 209-245.

\bibitem{DPW:tesi} Dorfmeister, J., Pedit, F., Wu, H. (1998). \emph{Weierstrass type representation of harmonic maps into symmetric spaces}. Communications in analysis and geometry, 6(4), 633-668.

\bibitem{Fer:tesi} Fernández, I., Mira, P. (2010). \emph{Constant mean curvature surfaces in 3-dimensional Thurston geometries}. In Proceedings of the International Congress of Mathematicians 2010 (ICM 2010) (In 4 Volumes) Vol. I: Plenary Lectures and Ceremonies Vols. II–IV: Invited Lectures (pp. 830-861).

\bibitem{Gro:tesi} Grothendieck, A. (1957). \emph{Sur la classification des fibrés holomorphes sur la sphere de Riemann}. American Journal of Mathematics, 79(1), 121-138.

\bibitem{HeHe:tesi} Heller, L. and Heller, S. (2016). \emph{Abelianization of {F}uchsian systems on a 4-punctured sphere}. J. Symplectic Geom., 14, 1059--1088.
              and applications

\bibitem{HeHeS:tesi} Heller, L., Heller, S., Schmitt, N. (2018). \emph{Navigating the space of symmetric CMC surfaces}. Journal of Differential Geometry, 110(3), 413-455.

\bibitem{HeHeTr:tesi}Heller, L., Heller, S., Traizet, M. (2019). \emph{Area Estimates for High genus Lawson surfaces via DPW}. arXiv preprint arXiv:1907.07139.

\bibitem{He1:tesi}Heller, S. (2013). \emph{Lawson’s genus two surface and meromorphic connections}. Mathematische Zeitschrift, 274(3-4), 745-760.

\bibitem{Heu:tesi}Heu, V. (2008). \emph{Isomonodromic deformations and maximally stable bundles}. Working paper on preprint

\bibitem{Hit:tesi} Hitchin, N. J. (1990). \emph{Harmonic maps from a 2-torus to the 3-sphere}. Journal of Differential Geometry, 31(3), 627-710.

\bibitem{Hit3:tesi}Hitchin, N. J. (1987). \emph{The self‐duality equations on a Riemann surface}. Proceedings of the London Mathematical Society, 3(1), 59-126.

\bibitem{Hit2:tesi} Hitchin, N. J., Segal, G. B., Ward, R. S. (1999). \emph{Integrable systems}, volume 4 of Oxford Graduate Texts in Mathematics.

\bibitem{KPS:tesi} Karcher, H., Pinkall, U., Sterling, I. (1988). \emph{New minimal surfaces in $\mathbb{S}^ 3$}. Journal of Differential Geometry, 28(2), 169-185.

\bibitem{La:tesi} Lawson Jr, H. B. (1970). \emph{Complete minimal surfaces in $\mathbb{S}^3$}. Annals of Mathematics, 335-374.

\bibitem{Tesi} Manca B. (2019). \emph{DPW potentials for compact symmetric CMC surfaces in the 3-sphere}. Ph.D. thesis, Universit\`a degli Studi di Cagliari. Defended on 27/06/2019 and published on the same day at the Unica Iris system. http://hdl.handle.net/11584/271388 

\bibitem{Mir:tesi} Miranda, R. (1995). \emph{Algebraic curves and Riemann surfaces} (Vol. 5). American Mathematical Soc..

\bibitem{NaSt:tesi} Nasatyr, E. B., Steer, B. (1995). \emph{The Narasimhan—Seshadri theorem for parabolic bundles: an orbifold approach}. Philosophical Transactions of the Royal Society of London. Series A: Physical and Engineering Sciences, 353(1702), 137-171.

\bibitem{Schaf:tesi}Schaffhauser, F. (2017). \emph{Finite group actions on moduli spaces of vector bundles}. Séminaire de théorie spectrale et géométrie, 34, 33-63.

\bibitem{SKKR:tesi} Schmitt, N., Kilian, M., Kobayashi, S. P., Rossman, W. (2007). \emph{Unitarization of monodromy representations and constant mean curvature trinoids in 3‐dimensional space forms}. Journal of the London Mathematical Society, 75(3), 563-581.

\bibitem{Ste:tesi} Stewart, T. E. (1961). \emph{Lifting group actions in fibre bundles}. Annals of Mathematics, 192-198.

\bibitem{Sza:tesi} Szamuely, T. (2009). \emph{Galois groups and fundamental groups} (Vol. 117). Cambridge University Press.





\end{thebibliography}
\end{document}